\documentclass[11pt]{amsart}
\usepackage{amsmath,amsthm,amssymb,amscd,url}
\usepackage{verbatim}
\usepackage[all]{xy} \SelectTips {cm}{}
\usepackage{tikz}
\usetikzlibrary{matrix,arrows,decorations.pathreplacing,decorations.pathmorphing}
\usepackage{appendix}
\usepackage{mathrsfs}
\usepackage[margin=1in]{geometry}

\usepackage{hyperref}
\hypersetup{
    colorlinks,
    citecolor=blue,
    filecolor=blue,
    linkcolor=blue,
    urlcolor=blue
}

\newtheorem{theorem}{Theorem}[section] 
\newtheorem{lemma}[theorem]{Lemma}     
\newtheorem{corollary}[theorem]{Corollary}

\newtheorem{proposition}[theorem]{Proposition}

\theoremstyle{remark}
\newtheorem{remark}[theorem]{Remark}
\theoremstyle{definition}

\usepackage{amsfonts,url}

\def\Irr{mathbf{Irr}}
\def\fB{\mathfrak{B}}
\def\Irr{\mathrm{Irr}}

\def\Spec{\mathrm{Spec}}

\def\End{\mathrm{End}}
\def\Ind{\mathrm{Ind}}

\def\fo{\mathfrak{o}}
\def\fX{\mathfrak{X}}

\newcommand{\Z}{\mathbb{Z}}
\newcommand{\R}{\mathbb{R}}
\newcommand{\C}{\mathbb{C}}

\newcommand{\cH}{\mathcal{H}}

\newcommand{\cO}{\mathcal{O}}

\def\cT{\mathcal{T}}
\def\cK{\mathcal{K}}

\newcommand{\SL}{\mathrm{SL}}

\newcommand{\GL}{\mathrm{GL}}

\newcommand{\nr}{\mathrm{nr}}
\newcommand{\unr}{\mathrm{unr}}

\def\temp{\mathrm{t}}
\def\val{\mathrm{val}}

\newcommand{\fs}{\mathfrak{s}}
\def\fA{\mathfrak{A}}
\def\fC{\mathfrak{C}}
\def\fB{\mathfrak{B}}
\def\fp{\mathfrak{p}}
\def\fK{\mathfrak{K}}

\def\ft{\mathfrak{t}}

\begin{document}
\title[Klein bottle]{A twisted Hecke algebra, then and now, and a Klein bottle of tempered representations}

\begin{abstract} Let $F$ be a non-archimedean local field such that $4|q-1$, with $q$ the order of the residue field of $F$, and let $(M^0,\sigma^0)$ be the depth-zero
cuspidal pair for the  twisted Levi subgroup $G^0$ of $\SL_8$ arising from quadratic and quartic field extensions, as defined in \cite{AFO}.   
Then the corresponding Bernstein block is described by a twisted Hecke algebra $\cH^0$.   We describe $\cH^0$ explicitly as a noncommutative $\C$-algebra with generators and relations.   
We describe explicitly the simple modules of $\cH^0$.   All the simple modules are $2$-dimensional.   The  primitive spectrum of $\cH^0$ is then an explicit  complex algebraic variety $\fX$.
 The maximal compact real form of $\fX$ is homeomorphic to a Klein bottle. This Klein bottle is a model of the unitary principal series of $G^0$ attached  to the cuspidal pair $(M^0, \sigma^0)$.  
 
In addition, we make a full comparison with the classical situation in which 
$G = \SL_8$ and $(M,\sigma)$ is a cuspidal pair for $G$.   The supercuspidal representantion $\sigma$ is constructed from the same quadratic and quartic extensions of $F$.   Let $\fs$ be the point in the Bernstein spectrum $\fB G$ determined by $(M,\sigma)$.  
We compare the two points $\fs$ and $\fs^0$ and show explicitly that the corresponding Bernstein varieties 
  are isomorphic.   In that case, the Klein bottle re-appears, this time floating in the tempered dual of $\SL_8$.   
 \end{abstract}

\author[A.-M. Aubert]{Anne-Marie Aubert}
\address{Sorbonne Universit\'e and Universit\'e Paris Cit\'e, CNRS, IMJ-PRG, F-75005 Paris, France}
\email{anne-marie.aubert@imj-prg.fr}
\author[R. Plymen]{Roger Plymen}\address{
School of Mathematics, Southampton University, Southampton SO17 1BJ,  England 
\emph{and} School of Mathematics, Manchester University, Manchester M13 9PL, England}\email{
r.j.plymen@soton.ac.uk \quad roger.j.plymen@manchester.ac.uk}

\date{\today}
%\subjclass[2010]{20G25, 22E50}
%\keywords{representation theory, division algebra, Hecke algebra, types}
\maketitle

\tableofcontents

\section{Introduction}  

Let $G$  denote a connected reductive group over a non-archimedean local field $F$.  Then, following \cite{Ber}, the category $\mathrm{Rep}(G(F))$ of smooth, complex representations of $G(F)$ is a direct
product of full subcategories called ``Bernstein blocks":
\[
\mathrm{Rep}(G(F)) = \prod_{\fs \in \mathfrak{B}G} \mathrm{Rep}^\fs(G(F)).
\]
Each of the blocks $\mathrm{Rep}^\fs(G(F))$ is equivalent to the category of unital right modules
over an algebra $\cH^\fs$.   Suppose that the category $\mathrm{Rep}^\fs(G(F))$ has an associated type,
as defined by Bushnell and Kutzko \cite{BK}, i.e., a compact open subgroup $K$ of $G(F)$
and an irreducible smooth representation $\rho$ of $K$ such that a representation $\pi \in 
\mathrm{Rep}(G(F))$ belongs to $\mathrm{Rep}^\fs(G(F))$ if and only if every irreducible subquotient of $\pi$ 
contains $\rho$ upon restriction to $K$. Then we can replace the algebra $\cH^\fs$ by the Hecke
algebra $\cH(G(F),(K, \rho))$ of all compactly supported, $\End_{\C}(\rho)$-valued functions on
$G(F)$ that transform on the left and right according to $\rho$. That is, $\mathrm{Rep}^\fs(G(F))$ is
equivalent to the category of modules over $\cH(G(F),(K,\rho))$.

We shall apply these considerations to the  group $G^0$, a twisted Levi of $\SL_8$, recently considered in Adler-Fintzen-Ohara \cite{AFO}.   Following their construction, let $M^0$ be a maximal torus in $G^0$ and 
let $\sigma^0$ be the quadratic character  of $M^0$ defined in their article.   Let $\fs^0$ be the point in the Bernstein spectrum $\mathfrak{B}G^0$ determined by the cuspidal pair $(M^0, \sigma^0)$.
  Let $(K^0, \rho^0)$ be the Iwahori pair constructed in \cite{AFO}.   Then $(K^0, \rho^0)$ is an $\fs^0$-type.   

We will write
\[
\cH^0 = \cH^{\fs^0} = \mathcal{H}(G^0(F), (K^0, \rho^0)).
\]
In particular, $\cH^0$ is an example of
a Hecke algebra attached to a depth-zero, principal-series block of a quasi-split
group that requires a non-trivial $2$-cocycle.   In the context of the present article, we have
\[
\boxed{
\mathcal{H}(G^0(F), (K^0, \rho^0)) \simeq  \C[W_{\mathrm{aff}}(\widetilde{A}_1) \times \Z, \mu^\mathcal{T}].
}
\]
The group $W_{\mathrm{aff}}(\widetilde{A}_1)$ is the affine Weyl group of the affine root system of type $\widetilde{A}_1$ and  $\mu^\mathcal{T}$ is a non-trivial $2$-cocyle (see \cite[Corollary~3.9]{AFO}). It is defined as the restriction to $(W_{\mathrm{aff}}(\widetilde{A}_1) \times \Z)^2$  of the $2$-cocyle constructed in \cite[Notation~3.6.1]{AFMO1}, which is associated to a family $\mathcal{T}$ of intertwining operators.

In section 2 we review the notation and definitions from \cite{AFO} and check that the Iwahori pair $(K^0, \rho^0)$ is an $\fs^0$-type.   

In section 3 we compute explicitly the Bernstein torus $T^{\fs^0}$, the  group $W^{\fs^0}$ and the variety $T^{\fs^0} / W^{\fs^0}$.   We note  that $T^{\fs^0}$  is  a complex torus of dimension $2$.
 We write 
\[
\mathfrak{X} = T^{\fs^0} / W^{\fs^0}
\]
and prove that the maximal compact real form of $\fX$ is  a Klein bottle $\mathbf{Kb}$.   This Klein bottle $\mathbf{Kb}$ is a model of the unitary principal series of $G^0$ attached  to the cuspidal pair $(M^0, \sigma^0)$.   

Section 4 is devoted to background material in group cohomology.   

In section 5 we provide a  description of the twisted group algebra $\C[W_{\mathrm{aff}}(\widetilde{A}_1) \times \Z, \mu^\mathcal{T}]$ as a 
noncommutative $\C$-algebra with explicit generators and relations.  

In section 6 we compute the simple modules of the twisted group algebra.   We prove that their parameter space is, as expected, isomorphic to $\mathfrak{X}$, an irreducible complex algebraic variety.      All the simple modules are $2$-dimensional.   

In section 7 we compute, by way of comparison, the simple modules of the untwisted group algebra.   The untwisted group algebra admits $1$-dimensional simple modules as well as $2$-dimensional simple modules.   

In section 8 we show explicitly how the Klein bottle arises as a compact Hausdorff subspace of the tempered dual of $G^0$.   

In section 9 we make a full comparison with the classical example in Roche \cite{R} and Goldberg-Roche \cite{GR2}.   This classical example is due ultimately to Phil Kutzko.   
In this section we write $G = \SL_8(F)$ and, as in \cite{R}, we only assume that $4|(q-1)$, where $q$ is the order of the residue field of $F$, a finite power of the residual characteristic $p$ of $F$.
We observe that this assumption allows us to consider small odd primes $p$, that is, cases where $p$ divides the order (which equals $8!$) of the Weyl group of $G$, e.g., $p=3$ (with $q=p^2=9$),  $p=5$ (with $q=p$ or $q=p^2=25$), and  $p=7$ (with $q=p$).

Then $G$ admits a cuspidal pair $(M,\sigma)$ where $\sigma$ is constructed via the same quadratic and quartic extensions of $F$.  
 Let $\fs$ be the point in the Bernstein spectrum $\fB G$ determined by the cuspidal pair $(M, \sigma)$.   We compare the two points $\fs$ and $\fs^0$ and show explicitly that the corresponding Bernstein varieties 
  are isomorphic.   In that case, the Klein bottle re-appears, this time floating in the tempered dual of $\SL_8$.

\subsubsection{Geometric meaning of the cocycle.}

The group $W_{\mathrm{aff}}(\widetilde{A}_1)$ is the infinite dihedral group
\[\mathrm{D}_\infty:=\left\langle s,t\,:\, s^2=t^2=1\right\rangle\,\simeq\,\Z/2\ast\Z/2,\]
which also admits the following presentation
\[\mathrm{D}_\infty=\left\langle r,s\,:\, s^2=1, srs=r^{-1}\right\rangle\,\simeq\,\Z \rtimes \Z/2.\]
 It is instructive to compare the untwisted group algebra $\C[\Gamma]$, where $\Gamma : = (\Z \rtimes \Z/2) \times \Z$,
with the twisted algebra $\C[\Gamma,\mu]$ with $\mu\colon \Gamma\times \Gamma\to\C^\times$ a non-trivial $2$-cocycle.

In the absence of the cocycle, the induced $\Z/2$--action on the
character torus $(\C^\times)^2$ is
\[
(w,z)\longmapsto (w,z^{-1}),
\]
whose fixed points give rise to one--dimensional simple modules.
After twisting, the commutation relation between the $\Z/2$--generator
and the $\Z$--direction is modified, and the involution becomes
\[
(w,z)\longmapsto (-w,z^{-1}),
\]
which acts freely on $(\C^\times)^2$.
Consequently all simple modules become two--dimensional, and on the
maximal compact real form $S^1\times S^1$ the resulting free
orientation-reversing quotient is a Klein bottle.
In this sense, the Klein bottle appearing in the tempered dual is the
topological manifestation of the nontrivial cohomology class
$[\mu]\in H^2(\Z/2\times\Z,\C^\times)$.

\smallskip

In summary, we have

\begin{theorem}  Let  $(M^0,\sigma^0)$ be the depth-zero
cuspidal pair for $G^0$ arising from quadratic and quartic field extensions of $F$.
Then the  Bernstein block $\mathrm{Rep}^{\fs^0}(G^0(F))$ is described by a twisted Hecke algebra whose primitive spectrum is the irreducible complex algebraic variety $\fX = (\C^\times  \times \C^\times) / (w,z) \sim (-w, z^{-1})$.
Its maximal compact real form is a Klein bottle. This Klein bottle is a model of the unitary principal series of $G^0$ attached  to the cuspidal pair $(M^0, \sigma^0)$.  

Let $G=\SL_8$ and let $\fs$ be the point in the Bernstein spectrum $\fB G$ determined by $(M,\sigma)$.  
The Bernstein varieties associated to the points $\fs$ and $\fs^0$ are isomorphic.   In that case, the Klein bottle re-appears, this time floating in the tempered dual of $\SL_8$.   
\end{theorem}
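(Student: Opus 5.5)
The plan follows the outline of the sections announced above. The starting point is the isomorphism $\cH^0\simeq \C[W_{\mathrm{aff}}(\widetilde{A}_1)\times\Z,\mu^{\cT}]$ from \cite[Corollary~3.9]{AFO}, together with the Bushnell--Kutzko equivalence: since $(K^0,\rho^0)$ is an $\fs^0$-type, $\mathrm{Rep}^{\fs^0}(G^0(F))$ is equivalent to the category of $\cH^0$-modules.

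The first step is to make the twisted algebra explicit. I would write $W_{\mathrm{aff}}(\widetilde{A}_1)=\langle r,s : s^2=1,\ srs=r^{-1}\rangle$ and let $u$ generate the extra $\Z$. Using the description of $H^2(\Gamma,\C^\times)$ from Section 4, I would normalize $\mu^{\cT}$ within its class so that $T_r$ and $T_u$ commute and $T_s^2=1$. The only surviving twist should then be
\[
T_sT_rT_s^{-1}=T_r^{-1}, \qquad T_sT_uT_s^{-1}=-T_u .
\]
Identifying the sign requires computing $\mu^{\cT}$ on the pairs $(s,u)$ and $(u,s)$ from the intertwining operators of \cite[Notation~3.6.1]{AFMO1}. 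I expect this step to be the main obstacle: one has to track how the operator attached to $s$ interacts with the $\Z$-direction coming from the quartic extension. My first attempt would be to reduce to the rank-one computation in \cite{AFO}, where the sign comes from the quadratic character $\sigma^0$ evaluated on a uniformizer, using $4\mid q-1$ so that $-1$ is a square.

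The second step is to compute the simple modules. The subalgebra $A=\C[T_r^{\pm1},T_u^{\pm1}]$ is commutative with character variety $(\C^\times)^2\ni(z,w)$, and $\cH^0=A\rtimes\langle T_s\rangle$ for the involution $(w,z)\mapsto(-w,z^{-1})$. This involution is free because $w\neq -w$, so Clifford theory shows that every simple module is $\mathrm{Ind}_A^{\cH^0}\chi_{(w,z)}$. These modules are $2$-dimensional, and two of them are isomorphic exactly when the characters lie in one orbit. Hence the primitive spectrum is $\fX=(\C^\times)^2/\!\sim$, which I would identify with $T^{\fs^0}/W^{\fs^0}$ from Section 3. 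Since the action is free, $\fX$ is a smooth irreducible affine variety.

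The third step handles the real form and the comparison. The maximal compact real form is $(S^1)^2/\!\sim$. The map $(w,z)\mapsto(-w,\bar z)$ is a fixed-point-free involution that reverses orientation, so the quotient is a compact nonorientable surface of Euler characteristic $0/2=0$, that is, a Klein bottle. Unitary characters give tempered, unitary induced modules, and the Plancherel topology agrees with the quotient topology. This identifies the Klein bottle with the unitary principal series attached to $(M^0,\sigma^0)$. For $\SL_8$, I would use Roche \cite{R} and Goldberg--Roche \cite{GR2} to compute $T^{\fs}$ and $W^{\fs}$ for the Kutzko pair $(M,\sigma)$. I would show that $W^{\fs}\cong\Z/2$ acts on a $2$-dimensional torus by the same free involution after a monomial change of coordinates, which gives $T^{\fs}/W^{\fs}\cong\fX$. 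The same compact-form argument then places the Klein bottle in the tempered dual of $\SL_8$.
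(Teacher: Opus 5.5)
Your outline matches the paper's proof almost step by step:
\begin{itemize}
\item the presentation of $\C[\Gamma,\mu^{\cT}]$ as $\C[X^{\pm1},Y^{\pm1}]\rtimes\Z/2$ with $X\mapsto X^{-1}$, $Y\mapsto -Y$;
\item Clifford theory for the free involution $(w,z)\mapsto(-w,z^{-1})$, giving only $2$-dimensional simples;
\item the match with $T^{\fs^0}/W^{\fs^0}$, which the paper does via the coordinates $w=bc$, $z=b/c$ on $T^{\fs^0}\cong(\C^\times)^2/J$;
\item the free orientation-reversing involution of $S^1\times S^1$;
\item irreducibility of $\mathrm{Ind}(\psi\otimes\sigma^0)$ for unitary $\psi$ (from freeness, via \cite{R}), together with the $C^*$-Plancherel theorem;
\item for $\SL_8$, the monomial map $a^{\val\circ\det}\mapsto a^{\val\circ N_{E_2/F}}$ (resp.\ $N_{E_4/F}$), made equivariant by ${}^w\sigma\cong(1\otimes1\otimes\chi_0)\sigma$ from \cite{R}.
\end{itemize}
The real difference is the step you flag as the main obstacle. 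There the paper does no intertwining-operator computation at all.

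Since $s$ and $u$ commute, $\mu^{\cT}(s,u)/\mu^{\cT}(u,s)$ is unchanged by coboundaries. It is the scalar by which $T_s$ conjugates $T_u$, so it is $\pm1$ because $T_s^2$ is a scalar. By Section 4, $H^2(\Z/2\times\Z,\C^\times)\cong\Z/2$, and this commutator detects the nonzero class. So it equals $-1$ exactly when $[\mu^{\cT}]$ restricts nontrivially to $\langle s\rangle\times\Z$, which is \cite[Corollary~3.6]{AFO}. Hence $T_sT_u=-T_uT_s$ is forced, and no reduction to rank one is needed. Your heuristic about $\sigma^0$ at a uniformizer is really the Section 3 computation ${}^{\widetilde s}\sigma^0\cong\chi\sigma^0$ with $\chi=(1:1:-i)$. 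The paper uses that computation for the action on $T^{\fs^0}$, not to determine $\mu^{\cT}$.

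One claim in your first step is wrong: you cannot make $T_r$ and $T_u$ commute by renormalizing within the class. By K\"unneth, $H_2(\Gamma,\Z)\cong H_1(\mathrm{D}_\infty)\otimes H_1(\Z)\cong(\Z/2)^2$, so $H^2(\Gamma,\C^\times)\cong(\Z/2)^2$. The class is detected by the two signs $\mu(g,u)/\mu(u,g)$ for the reflections $g=s$ and $g=sr$. Since $r=s\cdot sr$, $T_rT_u=T_uT_r$ holds exactly when these two signs agree.

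Nontriviality on $\langle s\rangle\times\Z$ alone therefore leaves a second candidate class, in which $T_r$ and $T_u$ anticommute. That algebra has $4$-dimensional generic simple modules, so it is not a crossed product $\C[T]\rtimes\Z/2$. Ruling it out needs separate input. One option is that the translation part of $\cH^0$ is the image of the commutative algebra $\cH(M^0,\sigma^0|_{K^0\cap M^0})$ under the Bushnell--Kutzko embedding attached to the cover. The paper's formula $(-1)^{\varepsilon n_2}$ on all of $\Gamma$ builds in exactly this choice, so you should state it as an input rather than as a normalization.
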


In both cases of $G^0$ and $G$, a \emph{non-trivial} cocycle in the Hecke algebra is associated with a compact connected \emph{non-orientable} component in the tempered dual.   
We are tempted to ask whether this phenomenon arises in other examples.

\section{Depth zero}  We begin by recalling the notation in \cite{AFO}.    Let $F$ denote a non-archimedean local field with residue field $\mathfrak{f}$  of characteristic $p$   
(assumed odd) and order $q$.  Let $\fo_F$  denote the ring of  integers of $F$, and $\fp_F$ the maximal ideal of $\fo_F$.  We fix a uniformizer $\varpi_F$ in $F$ and let $\zeta$  denote a primitive $(q-1)$-st root of unity in $F$.   
 We assume also that $4$ divides $q-1$.   It follows that there is a unique character $\eta \colon F^\times \to \C^\times$ that is trivial on $\varpi_F$ and $1 + \fp_F$ and satisfies $\eta(\zeta) = \sqrt{-1}$.   
 
 Let $\chi_0$ be the unramified character of $F^\times$ such that $\chi_0(\varpi_F ) = - \sqrt{-1}$.   
 
 Let $E_2$ be the splitting field of the polynomial $X^2 + \varpi_F$ 
 and  $E_4$ be the splitting field of the polynomial $X^4 + \zeta \varpi_F$.   Note that the
fields that we denote by $E_2$  and $E_4$  here are denoted by $E_1$ and $E_2$, respectively,
in \cite{R}. Let $\varpi_{E_2}$, resp. $\varpi_{E_4}$, denote a uniformizer of $E_2$, resp., $E_4$, such that
$\varpi^2_{E_2} = - \varpi_F$, resp. $\varpi^4_{E_4} = - \zeta \varpi_F$.   

Let
\begin{align*}
G^0 &= (\GL_2(E_2) \times \GL_1(E_4)) \cap \SL_8(F)\\
K^0 & = G^0(F)_{x_0} =  ( I_2  \times I_4) \cap \SL_8(F)\\
\rho^0\colon K^0 & \to  \C^\times, \quad \quad ((\begin{smallmatrix}
a&b\\
c & d
\end{smallmatrix}),-) \mapsto ( \eta \circ N_{E_2/F})(d)\\
\eta \colon F^\times & \to  \C^\times  \quad  \textrm{is the depth-zero character from \S1}\\
M^0 &= (\GL_1(E_2) \times \GL_1(E_2) \times \GL_1(E_4)) \cap \SL_8(F)\\
\sigma^0\colon M^0 & \to  \C^\times, \quad \quad 1 \otimes \eta \circ N_{E_2/F} \otimes 1 
%K_{M^0} &=& (O^\times_{E_2} \times O^\times_{E_2} \times O^\times_{E_4}) \cap \SL_8(F) \quad \textrm{if} \quad K^0 = G^0(F)_{x_0}\\
\end{align*}
with notation (apart from our definition of $\sigma^0$) from \cite{AFO}.   Then we have an isomorphism of noncommutative $\C$-algebras:
\[
\boxed{
\mathcal{H}(G^0(F), (K^0, \rho^0)) \simeq  \C[W_{\mathrm{aff}}(\widetilde{A}_1) \times \Z, \mu^\mathcal{T}]
}
\]
by \cite[Prop. 3.4 and  Corollary 3.9]{AFO}.

%We have $W(G^0, M^0)= \Z/2\Z$.

%\subsubsection{On the point $\fs^0$ in the Bernstein spectrum $\mathcal{B}G^0$}  We recall the definitions of $G^0, K^0$ etc. 

\begin{lemma}  Let $\fs^0$ be the point in the Bernstein spectrum $\mathfrak{B}G^0$ determined by the  cuspidal pair $(M^0, \sigma^0)$.   Then $(K^0, \rho^0)$ is an $\fs^0$-type.  
\end{lemma}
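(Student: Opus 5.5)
The plan is to reduce the statement to the general depth-zero type theory for principal-series blocks: Morris's theory of depth-zero types, as made explicit for twisted Levi subgroups in \cite{AFO} and \cite{AFMO1}. We will not verify the Bushnell--Kutzko definition directly. Instead we check that $(K^0,\rho^0)$ is a $G^0$-cover, in the sense of \cite{BK}, of a depth-zero type $(K_{M^0},\rho_{M^0})$ for the cuspidal pair $(M^0,\sigma^0)$. The cover theorem of \cite{BK} then shows that $(K^0,\rho^0)$ is a type for the inertial class of $(M^0,\sigma^0)$ in $G^0$, which is exactly $\fs^0$.

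First we identify the type on the torus. Since $M^0$ is a (non-split) maximal torus, its maximal compact subgroup is
\[
K_{M^0}=(\fo_{E_2}^\times\times\fo_{E_2}^\times\times\fo_{E_4}^\times)\cap \SL_8(F).
\]
We put $\rho_{M^0}=\sigma^0|_{K_{M^0}}$, that is, $1\otimes(\eta\circ N_{E_2/F})\otimes 1$ restricted to units. For a torus, $(K_{M^0},\sigma^0|_{K_{M^0}})$ is an $[M^0,\sigma^0]_{M^0}$-type. The reason is that an irreducible representation of $M^0$ contains this character of the maximal compact subgroup exactly when it is an unramified twist of $\sigma^0$. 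Here we use that $M^0/K_{M^0}$ is a lattice and that unramified characters are precisely those trivial on $K_{M^0}$. In addition, $\eta\circ N_{E_2/F}$ is trivial on $1+\fp_{E_2}$, because $\eta$ is trivial on $1+\fp_F$, so $\rho_{M^0}$ has depth zero.

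Second, we check the cover conditions. Let $P^0=M^0U^0$ be the Borel of the $\GL_2(E_2)$ factor, and let $I_2$ be the Iwahori of $\GL_2(E_2)$ attached to $x_0$. The Iwahori decomposition
\[
K^0=(K^0\cap U^{0,-})\,K_{M^0}\,(K^0\cap U^0)
\]
holds because $I_2\times I_4$ has it and intersecting with $\SL_8$ only constrains the diagonal part. We need $\rho^0$ to be trivial on the two unipotent pieces and to restrict to $\rho_{M^0}$ on $K_{M^0}$. This is visible from the formula $\rho^0(\begin{smallmatrix}a&b\\c&d\end{smallmatrix})=\eta(N_{E_2/F}(d))$: reduction mod $\fp_{E_2}$ makes $d$ behave multiplicatively on the Iwahori, so $\rho^0$ is the inflation of a character of the reductive quotient. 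The remaining cover condition is the existence of an invertible element of $\cH(G^0(F),(K^0,\rho^0))$ supported on $K^0 z K^0$ for a strongly $(P^0,K^0)$-positive central element $z$, for instance $z=\mathrm{diag}(\varpi_{E_2},1)$ suitably adjusted to lie in $\SL_8$. For depth-zero Iwahori-type data this is supplied by Morris's result, in the form of the Bushnell--Kutzko criterion in \cite[\S 7]{BK}. Equivalently, it comes from the Iwahori--Matsumoto presentation of $\cH^0\simeq\C[W_{\mathrm{aff}}(\widetilde A_1)\times\Z,\mu^{\mathcal T}]$, \cite[Prop. 3.4, Cor. 3.9]{AFO}, where the translation elements are invertible.

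The main obstacle is the $\SL_8$ determinant condition. Because of it, $M^0$ and $K^0$ are not products, so we must check two things. First, the positive element $z$ can be taken inside $M^0$; one compensates in the $E_4$ factor, using that $N_{E_4/F}(\varpi_{E_4})=\pm\zeta\varpi_F$ up to sign. Second, the unramified characters of $M^0$ still separate the inertial class as claimed. If the adjustment of $z$ becomes messy, the fallback is to cite directly the general statement in \cite{AFO} (following Morris) that $(G^0(F)_{x_0},\rho^0)$ is a type for the depth-zero principal-series block it defines. After that, one only has to check that the block is the one containing $\sigma^0$, which comes down to $\rho^0|_{K_{M^0}}=\sigma^0|_{K_{M^0}}$.
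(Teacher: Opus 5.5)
Your proposal follows the paper's proof. Both realise $(K^0,\rho^0)$ as a $G^0$-cover of $(K^0\cap M^0,\sigma^0|_{K^0\cap M^0})$ by checking the Iwahori decomposition, the triviality of $\rho^0$ on the unipotent pieces and its agreement with $\sigma^0$ on the torus part, then invoke Morris and the Bushnell--Kutzko cover theorem; you additionally make explicit the torus-type and invertible-element checks that the paper leaves inside the citation.

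One minor slip: $\mathrm{diag}(\varpi_{E_2},1)$ cannot be completed to an element of $M^0$ by compensating only in the $E_4$ factor, since this would require $v_{E_4}=-1$, whereas elements of $M^0$ have even $E_4$-valuation (Lemma~\ref{normE4}). However, $(\varpi_{E_2},\varpi_{E_2}^{-1},1)\in M^0$ is already strongly positive, so the argument is unaffected.
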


 \begin{proof}
Let $P^0=M^0N^0$ be the parabolic subgroup of $G^0$ with Levi factor $M^0$ defining the inertial
class $\fs^0=[M^0,\sigma^0]$.  By construction,
\[
K^0=(I_2\times I_4)\cap \SL_8(F)
\]
admits an Iwahori decomposition with respect to $P^0$:
\[
K^0=(K^0\cap \overline{N^0})\,(K^0\cap M^0)\,(K^0\cap N^0).
\]
The representation $\rho^0$ is a depth-zero character, hence is trivial on the pro-$p$ radicals.
In particular it is trivial on the unipotent parts $K^0\cap N^0$ and $K^0\cap \overline{N^0}$.
Moreover, by the defining formulae, the restriction of $\rho^0$ to $K^0\cap M^0$ coincides with
$\sigma^0|_{K^0\cap M^0}$.

It follows from \cite[\S 4.9]{Mo} that $(K^0,\rho^0)$ is a $G^0$-cover of the pair \[(K^0\cap M^0,\;\sigma^0|_{K^0\cap M^0}).\] 
and hence is an $\fs^0$-type for $G^0$.\end{proof}

\section{On the depth-zero variety}

We will begin by collecting some useful results.

\begin{lemma}\label{etas}
\begin{align}
(\eta \circ N_{E_2/F}) \otimes 1 \otimes 1|_{M^0} &= 1 \otimes (\eta^{-1} \circ N_{E_2/F})  \otimes (\eta^{-1} \circ N_{E_4/F})|_{M^0}\\
(\eta^2 \circ N_{E_2/F})(E_2^\times) & = 1\\
(x,y,z) \in M^0  \implies (\eta^2 \circ N_{E_4/F}) (z) &= 1\\
(\eta^{-1} \circ N_{E_4/F})(\mathcal{O}^\times_{E_4}) & = 1. 
\end{align}
\end{lemma}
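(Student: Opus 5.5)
The plan is to reduce all four identities to two inputs: the explicit description of $\eta$, and the fact that membership in $M^0$ is a norm condition. Since $\eta$ is trivial on $\varpi_F$ and on $1+\fp_F$ and satisfies $\eta(\zeta)=\sqrt{-1}$, and since $F^\times=\varpi_F^{\Z}\times\langle\zeta\rangle\times(1+\fp_F)$, the character $\eta$ has order exactly $4$. It factors through $F^\times\to\mathfrak f^\times$ on units and is trivial on $\varpi_F$. In particular $\eta^2$ is trivial on $\varpi_F$ and on all squares of units, and $\eta$ is trivial on fourth powers of units.

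First I identify the determinant condition. Under restriction of scalars, the determinant over $F$ of an element of $\GL_1(E)$, viewed in $\GL_{[E:F]}(F)$, is its norm $N_{E/F}$. So $(x,y,z)\in M^0$ means precisely
\[
N_{E_2/F}(x)\,N_{E_2/F}(y)\,N_{E_4/F}(z)=1 .
\]
Identity (1) follows at once: apply $\eta$ to this relation and use multiplicativity to get $\eta(N_{E_2/F}x)=\eta^{-1}(N_{E_2/F}y)\,\eta^{-1}(N_{E_4/F}z)$.

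For (2), I write $E_2^\times=\varpi_{E_2}^{\Z}\times\mathcal O_{E_2}^\times$. The extension $E_2/F$ is totally ramified of degree $2$, so its residue field is $\mathfrak f$.
\begin{itemize}
\item For the uniformizer, $N_{E_2/F}(\varpi_{E_2})=\varpi_{E_2}\cdot(-\varpi_{E_2})=-\varpi_{E_2}^2=\varpi_F$, and $\eta^2(\varpi_F)=1$.
\item For a unit $u$, the norm satisfies $N_{E_2/F}(u)\equiv \bar u^{\,2}$ modulo $\fp_F$, so $N_{E_2/F}(u)$ is a square of a unit times an element of $1+\fp_F$. Hence $\eta^2(N_{E_2/F}u)=\eta^4(\cdot)=1$.
\end{itemize}

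Identity (4) is the same argument for $E_4$. Again $E_4/F$ is totally ramified, now of degree $4$, so for $u\in\mathcal O_{E_4}^\times$ we have $N_{E_4/F}(u)\equiv\bar u^{\,4}$ modulo $1+\fp_F$. Since $\eta$ has order $4$ and kills $1+\fp_F$, this gives $\eta(N_{E_4/F}u)=1$, hence also $\eta^{-1}(N_{E_4/F}u)=1$.

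Identity (3) combines the determinant relation with (2): $\eta^2(N_{E_4/F}z)=\eta^{-2}(N_{E_2/F}x)\,\eta^{-2}(N_{E_2/F}y)=1$.

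The only step requiring care is the congruence for norms of units in a totally ramified extension. Concretely, one needs that the norm of a Teichmüller-type unit lifting $\bar u\in\mathfrak f^\times$ is its $[E:F]$-th power, and that $N(1+\fp_E)\subset 1+\fp_F$. Both are standard: the Galois conjugates of a unit all reduce to the same residue class, since the residue extension is trivial.
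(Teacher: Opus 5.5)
Your proof is correct and follows the paper's route. Identities (1) and (3) come from applying $\eta$, resp.\ $\eta^2$, to the norm-one relation $N_{E_2/F}(x)N_{E_2/F}(y)N_{E_4/F}(z)=1$, and (4) from writing a unit as a Teichm\"uller unit times a principal unit, with $N_{E_4/F}(u)=u^4$. The only divergence is in (2): you derive it directly from $N_{E_2/F}(\varpi_{E_2})=\varpi_F$ and the same unit argument, whereas the paper quotes Roche's description $N_{E_2/F}(E_2^\times)=(1+\fp_F)\langle\zeta^2\rangle\langle\varpi_F\rangle$. You also correctly note that only the inclusion $N(1+\fp_E)\subseteq 1+\fp_F$ is needed, and this holds without tameness.
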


\begin{proof}    We recall that 
\begin{align*}
M^0 &= \{(x,y,z) \in E_2^\times \times  E_2^\times \times E_4^\times \,:\,  N_{E_2/F}(xy) \cdot N_{E_4/F}(z) = 1\}\\
\sigma^0(x,y,z) & =( \eta \circ N_{E_2/F})(y).
\end{align*}

If $(x,y,z) \in M^0$ then we have the norm-one condition
\begin{align}\label{NE2}
N_{E_2/F}(x) \cdot N_{E_2/F}(y) \cdot  N_{E_4/F}(z) = 1.
\end{align}

(1) follows by applying $\eta$ to this equation.   

(2): we have, from \cite[\S4.1]{R}, the following equation:
\begin{align*}
N_{E_2/F} (E_2^\times) &= (1 + \fp_F) \langle \zeta^2 \rangle \langle \varpi_F \rangle. 
\end{align*}
Therefore, we have
\begin{align*}
 \eta^2 (N_{E_2/F} E_2^\times) &= \eta^2(\langle \zeta^2 \rangle ) \\
& = 1.
\end{align*}
It follows that $\sigma^0$ is a quadratic chracter of $M^0$.     

(3) follows from (2) and (5) after applying $\eta^2$.   

(4): we proceed as follows.     In this paragraph, we write $E = E_4$.   Every $z\in \fo^\times_{E}$ can be written as
\[
z = u\cdot (1+\alpha),
\qquad   u\in\mu_{q-1},\ \alpha\in\mathfrak p_E.
\]
Since $E/F$ is tame, one has $N_{E/F}(1+\mathfrak p_E)=1+\mathfrak p_F$; thus $\eta$ (hence also $\eta^{-1}$) is trivial on norms of principal units.
Also, because $E/F$ is totally ramified, $k_{E} =k_F$, so the Teichm\"uller subgroup $\mu_{q-1}\subset \fo_E^\times$ is identified with that in $F$.
For $u\in\mu_{q-1}$, the Galois group acts trivially on $u$, so
\[
N_{E/F}(u)=u^{[E:F]}=u^4,
\]
and $\eta(u^4)=1$ because $\eta(\zeta^4)=\eta(\zeta)^4=i^4=1$ (hence $\eta$ kills all fourth powers in $\mu_{q-1}$).
Consequently,
\[
\eta^{-1}\!\bigl(N_{E/F}(u(1+\alpha))\bigr)=1
\quad\text{for all }u\in\mu_{q-1},\ \alpha\in\mathfrak p_E.
\]
\end{proof}

\begin{theorem}\label{ETAS}  We have
\[
{}^{\widetilde{s}} \sigma^0 \cong \chi \cdot \sigma^0
\]
where $\chi$ is an unramified character of $M^0$.   
\end{theorem}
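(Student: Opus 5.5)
The element $\widetilde{s}$ is a representative in $N_{G^0}(M^0)$ of the nontrivial element of $W(G^0,M^0)\cong \Z/2$. It normalizes $M^0$ and swaps the two $E_2^\times$ factors:
\[
{}^{\widetilde s}(x,y,z)=(y,x,z),
\]
possibly up to a Galois twist on one coordinate, which does not affect $N_{E_2/F}$. I will first fix such a representative, for instance the Weyl element of $\GL_2(E_2)$, corrected by a scalar so that its determinant over $F$ is $1$ and it lies in $\SL_8(F)$. Then I compute
\[
{}^{\widetilde s}\sigma^0(x,y,z)=(\eta\circ N_{E_2/F})(x).
\]
The goal is to exhibit an unramified character $\chi$ of $M^0$ with $(\eta\circ N_{E_2/F})(x)=\chi(x,y,z)\,(\eta\circ N_{E_2/F})(y)$ on $M^0$.

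By Lemma~\ref{etas}(1), on $M^0$ we have $(\eta\circ N_{E_2/F})(x)=(\eta^{-1}\circ N_{E_2/F})(y)\,(\eta^{-1}\circ N_{E_4/F})(z)$. Hence
\[
\chi(x,y,z)=(\eta^{-2}\circ N_{E_2/F})(y)\cdot(\eta^{-1}\circ N_{E_4/F})(z).
\]
By Lemma~\ref{etas}(2) the first factor is trivial, so $\chi(x,y,z)=(\eta^{-1}\circ N_{E_4/F})(z)$. This is a character of $M^0$, being a composite of homomorphisms.

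It remains to check that $\chi$ is unramified, i.e.\ trivial on the maximal compact subgroup $M^0\cap(\fo_{E_2}^\times\times\fo_{E_2}^\times\times\fo_{E_4}^\times)$. This is exactly Lemma~\ref{etas}(4): $\eta^{-1}\circ N_{E_4/F}$ kills $\fo_{E_4}^\times$. By Lemma~\ref{etas}(3), $\chi$ is moreover quadratic. It is nontrivial, since $N_{E_4/F}(\varpi_{E_4})=\pm\zeta\varpi_F$ up to sign, and $\eta$ is then $\pm\sqrt{-1}$-valued there. This nontriviality is not needed for the statement, but it matters later for the twisted action $(w,z)\mapsto(-w,z^{-1})$.

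The main obstacle is the first step: choosing $\widetilde s\in G^0$ normalizing $M^0$ and verifying its action on $M^0$. If the representative conjugates $(x,y)$ to $(\bar y,\bar x)$ via the nontrivial Galois automorphism, the argument is unchanged because $N_{E_2/F}$ is Galois-invariant. I would also check that any scalar correction needed to land in $\SL_8(F)$ commutes with $M^0$ and so does not alter the computed action.
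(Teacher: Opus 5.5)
Your proposal is correct and follows the paper's own argument: swapping $x$ and $y$ gives $(\eta\circ N_{E_2/F})(x)$, which you rewrite on $M^0$ via Lemma~\ref{etas}(1); you then remove the $\eta^{\pm 2}\circ N_{E_2/F}$ factor with Lemma~\ref{etas}(2) and obtain $\chi=1\otimes 1\otimes(\eta^{-1}\circ N_{E_4/F})$, which is unramified by Lemma~\ref{etas}(4) (the paper only asserts this last step, so your citation makes it explicit). The one slip is in your optional nontriviality remark: by Lemma~\ref{normE4}(2), $\varpi_{E_4}$ is never the $z$-coordinate of an element of $M^0$, so a correct witness is e.g.\ $(\zeta^{-1}\varpi_{E_2}^{-2},1,\varpi_{E_4}^{2})\in M^0$, where $\chi$ equals $-1$.
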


\begin{proof}   We will adapt the proof of Lemma 3.1 in \cite{AFO} to the present situation.

%We need an explicit formula for the character $\eta^{-1} \circ N_{E_4/F} : E_4^\times \to \C^\times$.     Lemma \ref{NE4} provides an explicit formula. 
Define
\[
\chi_4 \colon E_4^\times \to \C^\times, \qquad z \mapsto (\eta^{-1} \circ N_{E_4/F}(z).
\]
When we apply successively (\ref{etas}) and (2), we obtain
\begin{align*}
{}^{\widetilde{s}} \sigma^0 & = {}^{\widetilde{s}}(1 \otimes (\eta \circ N_{E_2/F}) \otimes 1)|_{M^0}\\
& = \left((\eta \circ N_{E_2/F}) \otimes 1 \otimes 1\right)|_{M^0}\\
& = \left(1 \otimes (\eta^{-1} \circ N_{E_2/F})  \otimes (\eta^{-1} \circ N_{E_4/F}) \right)|_{M^0}\\
& = \left(1 \otimes (\eta \circ N_{E_2/F})  \otimes \chi_4) \right)|_{M^0}\\
& = \chi \sigma^0
\end{align*}
where $\chi = 1 \otimes 1 \otimes \chi_4$.  The character $\chi$ is trivial on the maximal compact subgroup $K_{M^0}$ of $M^0$ and so $\chi$ is an \emph{unramified} character of $M^0$. 
\end{proof}  

We recall the definition of $W^{\fs^0}$:
\[W^{\fs^0}:=\left\{n\in N_{G^0}(M^0)\,: \,{}^n\sigma^0\simeq \sigma^0\chi \text{ for some unramified character $\chi$ of $M^0$}\right\}.\]

\begin{corollary}   Let $\fs^0$ denote the point in the Berstein spectrum $\mathfrak{B}G^0$ determined by the  cuspidal pair $(M^0, \sigma^0)$.     Then we have
\[
W^{\fs^0} = \{1, \widetilde{s}\}.
\]
\end{corollary}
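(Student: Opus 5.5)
The plan is to compute the relative Weyl group $N_{G^0}(M^0)/M^0$ and then check the defining condition of $W^{\fs^0}$ on each of its elements. By the convention implicit in the definition, $W^{\fs^0}$ is regarded as a subgroup of $N_{G^0}(M^0)/M^0$.

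\emph{Step 1: the relative Weyl group.} The group $G^0=(\GL_2(E_2)\times\GL_1(E_4))\cap\SL_8(F)$ has maximal torus
\[
M^0=(\GL_1(E_2)\times\GL_1(E_2)\times\GL_1(E_4))\cap\SL_8(F).
\]
Its normalizer lies in the normalizer of the diagonal torus of $\GL_2(E_2)\times\GL_1(E_4)$. That normalizer is the product of the monomial matrices in $\GL_2(E_2)$ with $E_4^\times$. Modulo the torus, this gives the Weyl group $\{1,s\}$ of $\GL_2(E_2)$, where $s$ swaps the two $E_2^\times$ factors.

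We need a lift $\widetilde{s}$ of $s$ lying in $\SL_8(F)$. Take the antidiagonal matrix $\left(\begin{smallmatrix}0&1\\ -1&0\end{smallmatrix}\right)$ in $\GL_2(E_2)$, with the identity in the $E_4$ factor. Its determinant as an element of $\GL_8(F)$ is $N_{E_2/F}(1)=1$. Adjusting by an element of the torus if needed, $\widetilde{s}$ lies in $G^0$. Since $M^0$ is the intersection of the full torus with $\SL_8(F)$, the quotient $N_{G^0}(M^0)/M^0$ is exactly $\{1,\widetilde{s}\}$. The one point needing care is that no element of the full torus outside $M^0$ normalizes while giving a new class. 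This holds because $N_{G^0}(M^0)\subset N(T)\cap\SL_8(F)$ and $T\cap\SL_8(F)=M^0$.

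\emph{Step 2: both elements lie in $W^{\fs^0}$.} The identity lies in $W^{\fs^0}$ with $\chi=1$. For $\widetilde{s}$, Theorem \ref{ETAS} gives ${}^{\widetilde{s}}\sigma^0\cong\chi\cdot\sigma^0$ with $\chi=1\otimes1\otimes\chi_4$ unramified. Hence $\widetilde{s}\in W^{\fs^0}$.

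Combining the two steps gives $W^{\fs^0}=\{1,\widetilde{s}\}$. The main obstacle is Step 1, namely justifying that the normalizer of $M^0$ in $G^0$ does not exceed the monomial group. For this I would use that $M^0$ is a maximal torus whose centralizer in $G^0$ is $M^0$ itself. The normalizer then permutes the $F$-rational eigenspace decomposition $E_2\oplus E_2\oplus E_4$. The $E_4$-block cannot be exchanged with an $E_2$-block because they have different dimensions and field structures. This leaves only the swap $s$ as a nontrivial possibility.
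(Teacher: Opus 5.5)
Your proposal is correct and follows the paper's route. The paper states this corollary as an immediate consequence of Theorem~\ref{ETAS}, tacitly using $N_{G^0}(M^0)/M^0=\{1,\widetilde{s}\}$; your Step~1 supplies that omitted verification via $N_{G^0}(M^0)\subset N(T)\cap\SL_8(F)$ and $T\cap\SL_8(F)=M^0$, and no adjustment of $\widetilde{s}=\bigl(\left(\begin{smallmatrix}0&1\\-1&0\end{smallmatrix}\right),1\bigr)$ is needed since its determinant is already $1$.
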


\begin{lemma}\label{normE4}
Let $E_4/F$ be as in \S2, with chosen uniformizers $\varpi_F\in F$ and
$\varpi_{E_4}\in E_4$ satisfying $\varpi_{E_4}^4=-\zeta\,\varpi_F$, and let
$\eta:F^\times\to\C^\times$ be the depth-zero character with $\eta(\varpi_F)=1$
and $\eta(\zeta)=\sqrt{-1}=i$.

(1) If $z \in E_4^\times$ then
\[
\eta^{-1}\!\left(N_{E_4/F}(z)\right)=(-i)^{\,v_{E_4}(z)}.
\]

(2) If $(x,y,z) \in M^0$ and $v_{E_4}$ denotes normalised valuation on $E_4^\times$ then $v_{E_4}(z) \equiv  0 \pmod 2$.   
\end{lemma}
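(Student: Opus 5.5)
For part (1), I will factor $z$ as a power of the uniformizer times a unit and treat the two factors separately, using multiplicativity of $N_{E_4/F}$ and of $\eta$. Part (2) then follows from part (1) together with Lemma~\ref{etas}(3).

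For (1), write $z=\varpi_{E_4}^{\,n}u$ with $n=v_{E_4}(z)$ and $u\in\cO^\times_{E_4}$. Then
\[
\eta^{-1}\bigl(N_{E_4/F}(z)\bigr)=\eta^{-1}\bigl(N_{E_4/F}(\varpi_{E_4})\bigr)^{n}\cdot \eta^{-1}\bigl(N_{E_4/F}(u)\bigr).
\]
\begin{itemize}
\item The unit factor equals $1$ by Lemma~\ref{etas}(4).
\item For the uniformizer, $\varpi_{E_4}$ is a root of $X^4+\zeta\varpi_F$, which is Eisenstein and hence irreducible over $F$. So it is the minimal polynomial of $\varpi_{E_4}$, and $E_4=F(\varpi_{E_4})$ has degree $4$.
\item The norm is therefore $(-1)^4$ times the constant term, i.e.\ $N_{E_4/F}(\varpi_{E_4})=\zeta\varpi_F$. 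This holds as the determinant of multiplication by $\varpi_{E_4}$, so no Galois hypothesis on $E_4/F$ is needed.
\item Since $\eta(\varpi_F)=1$ and $\eta(\zeta)=i$, we get $\eta^{-1}(\zeta\varpi_F)=i^{-1}=-i$.
\end{itemize}
Combining these gives $\eta^{-1}(N_{E_4/F}(z))=(-i)^{n}$.

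For (2), let $(x,y,z)\in M^0$ and put $n=v_{E_4}(z)$. By Lemma~\ref{etas}(3), $\eta^{2}(N_{E_4/F}(z))=1$, hence also $\eta^{-2}(N_{E_4/F}(z))=1$. By part (1) this quantity equals $\bigl((-i)^n\bigr)^2=(-1)^n$. Therefore $(-1)^n=1$, i.e.\ $n\equiv 0\pmod 2$.

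The only step needing care is the norm of the uniformizer: the irreducibility of $X^4+\zeta\varpi_F$ and the sign convention for the constant term. I expect no real obstacle there. Note that a direct valuation argument from the norm-one condition \eqref{NE2} would not work for (2): it only gives $v_{E_4}(z)=-(v_{E_2}(x)+v_{E_2}(y))$, which carries no parity information. The parity really comes from $\eta^2$ killing $N_{E_2/F}(E_2^\times)$, i.e.\ from Lemma~\ref{etas}(2), through (3).
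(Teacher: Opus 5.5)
Your proof is correct and follows essentially the same route as the paper: write $z=\varpi_{E_4}^{n}u$, kill the unit factor with Lemma~\ref{etas}(4), compute $\eta^{-1}(N_{E_4/F}(\varpi_{E_4}))=-i$, and deduce (2) from Lemma~\ref{etas}(3) via $(-i)^{2n}=(-1)^n$. The only minor difference is how you get $N_{E_4/F}(\varpi_{E_4})=\zeta\varpi_F$: you read it off the constant term of the Eisenstein polynomial $X^4+\zeta\varpi_F$, whereas the paper multiplies the conjugates $i^j\varpi_{E_4}$ (using $\mu_4\subset F^\times$), and both give the same value.
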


\begin{proof} We prove (1) first.   Write $z=\varpi_{E_4}^{\,k}u$ with $k=v_{E_4}(x)\in\Z$ and $u\in\cO_{E_4}^\times$.  
Then we have
\[
\eta^{-1}\!\left(N_{E_4/F}(u)\right)=1
\]
by Lemma \ref{etas} part (4).   

Next, because $\mu_4\subset F^\times$, the $F$-conjugates of $\varpi_{E_4}$ are
$\varpi_{E_4},\,i\varpi_{E_4},\,i^2\varpi_{E_4},\,i^3\varpi_{E_4}$, so
\[
N_{E_4/F}(\varpi_{E_4})
=\prod_{j=0}^3 i^j\,\varpi_{E_4}
=i^{0+1+2+3}\,\varpi_{E_4}^4
=i^6\,\varpi_{E_4}^4
=i^2\,\varpi_{E_4}^4
=(-1)\,\varpi_{E_4}^4.
\]
Using $\varpi_{E_4}^4=-\zeta\,\varpi_F$ we obtain $N_{E_4/F}(\varpi_{E_4})=\zeta\,\varpi_F$,
hence
\[
\eta^{-1}\!\left(N_{E_4/F}(\varpi_{E_4})\right)=\eta^{-1}(\zeta)\eta(\varpi_F)= - i.
\]
Consequently,
\[
\eta^{-1}\!\left(N_{E_4/F}(z)\right)
=\eta^{-1}\!\left(N_{E_4/F}(\varpi_{E_4})\right)^k\,\eta^{-1}\!\left(N_{E_4/F}(u)\right)
= (-i)^k.
\]

We now prove (2).   If $(x,y,z) \in M^0$ then $\eta^2(N_{E_4/F}(z))=1$ by   Lemma \ref{etas} part (3). This gives $(- i)^{2k} =  (-1)^k  = 1$, so $k$ is even.
\end{proof}

\subsubsection{Remark}    We note that the following two statements are consistent:
\begin{itemize}
\item  The character $\chi_4 \colon E_4^\times \to \C^\times$ \quad \textrm{is of order 4}
\item The character $(1 \otimes 1 \otimes \chi_4)_{M^0} \colon M^0 \to \C^\times$ \quad \textrm{is of order 2}.
\end{itemize} 

 We recall the definition of $\chi_4$ in Theorem \ref{ETAS}:
\[
\chi_4: = \eta^{-1} \circ N_{E_4/F}.
\]

For the normalised valuations $\val_F$ and $v_{E_4}$, we have 
\[
\val_F(N_{E_4/F}(x)) = f(E_4/F)v_{E_4}(x)
\]
for all $x \in E_4$.   Since $E_4/F$ is totally ramified, we have $f(E_4/F) = 1$.   So we have
\[
v_{E_4} = \val_F \circ N_{E_4/F}.
\]

We can now proceed as follows.  We have,  in the notation of Theorem \ref{ETAS}, using Lemma \ref{normE4}:
\begin{align*}
(x,y,z) & \in M^0 \\
\implies \chi(x,y,z) &= (1 \otimes 1 \otimes \chi_4)(x,y,z)\\
 &=  \chi_4(z)\\
&= \eta^{-1}(N_{E_4/F}(z))\\
&= (-i)^{v_{E_4}(z)}\\
& = ( - i)^{\val_F(N_{E_4/F}(z))}
\end{align*}
 
Therefore, in the coordinates presently to be defined, the twisting by $\chi$ which occurs in Theorem\ref{ETAS} 
is the twisting by the unramified character $(1 : 1: -i)$.

\subsubsection{The Bernstein torus $T^{\fs^0}$}   Let $X_{\nr}(M^0)$ denote the group of unramified characters of $M^0$.   We recall the definition of the Bernstein torus attached to the cuspidal pair
$(M^0, \sigma^0)$:
\[
T^{\fs^0} = \{\psi \otimes \sigma \,:\, \psi \in X_{\nr}(M^0)\}.   
\]

Let $X_{\nr}(E_2^{\times})$ (resp. $X_{\nr}(E_4^\times)$) denote the group of unramified characters of $E_4^\times$ (resp. $E_4^\times$).   Then we have
\[
T^{\fs^0} = \{\psi_1 \otimes \psi_2 ( \eta \circ N_{E_2/F}) \otimes \psi_3\colon \psi_1, \psi_2  \in X_{\nr}(E_2^\times), \psi_3 \in X_{\nr}(E_4^\times) \}.
\]

We will assign coordinates $a_1, a_2, a_3 \in \C^\times$ as follows:
\begin{align*}
\psi_1(x) &= a_1^{\val_F (N_{E_2/F}(x))}\\
\psi_2(y) &= a_2^{\val_F (N_{E_2/F}(y))}\\
\psi_3(z) &= a_3^{\val_F (N_{E_4/F}(z))}
\end{align*}
and write 
\[
\psi = \psi_1 \otimes \psi_2 \otimes \psi_3.
\]

  We recall the norm $1$ equation (5) for elements $(x,y,z) \in E_2^\times \times E_2^\times \times E_4^\times$:
\[
N_{E_2/F}(x) N_{E_2/F}(y) N_{E_4/F}(z) = 1.
\]
It follows that 
\begin{align}\label{VAL}
\val_F (N_{E_2/F}(x)) + \val_F( N_{E_2/F}(y)) + \val_F( N_{E_4/F}(z) ) &= 0.
\end{align}

%Now consider the following formula\begin{align*}\psi(x_1, x_2, x_3) &= a_1^{\val(N_{E_2/F}(x_1))} a_2^{\val(N_{E_2/F}(x_2))} a_3^{\val(N_{E_4/F})(x_3))} \\(a_1, a_2, a_3) & \in (C^\times)^3.\end{align*}

If $(x, y, z) \in \fo_{E_2}^\times \times \fo_{E_2}^\times \times \fo_{E_4}^\times$ then $\psi(x, y, z) = 1$ and so $\psi$ is indeed unramified.   To be sure of this,
let $E$ be $E_2$ or $E_4$.   Then $x \in \fo_E^\times \implies v_E(x) = 0 \implies \val_F(N_{E/F}(x)) = f(E/F)v_E(x) = 0$.   
Note that, if $a_1 = a_2 = a_3$ then $\psi(x_1, x_2, x_3) = 1$ by (\ref{VAL}).  
 
  We have, in effect, homogeneous coordinates so we will write 
\[
\psi = (a_1: a_2 : a_3 ).
\]
%We will set $a_3 = 1$ in which case $X_{\nr}(M^0$ is parametrized by two nonzero complex numbers $a_1, a_2$ and \[T^{\fs^0} = X_{\nr}(M^0) \cong (\C^\times)^2\]
%a complex torus of dimension $2$.   If $a_1, a_2$ have modulus $1$ then we have the compact torus $S^1 \times S^1$.   

We wish to compute the quotient variety
\[
T^{\fs^0} / W^{\fs^0}.
\]

 Suppose that $\sigma^0$ is twisted by an unramified character $\psi$ with coordinates $(a_1: a_2 : a_3)$. Then 
 \[
 {}^{\widetilde{s}}(\psi_1 \otimes \psi_2 ( \eta \circ N_{E_2/F}) \otimes \psi_3) = \psi_2 \otimes \psi_1 ( \eta \circ N_{E_2/F}) \otimes \chi_4 \psi_3)
 \] 
 as in the proof of Theorem \ref{ETAS}.   In terms of coordinates, this is
 \begin{align}\label{s-action}
s \cdot (a_1: a_2 : a_3) &= (a_2 : a_1: -ia_3)\\
& = (ia_2: ia_1: a_3)
\end{align}

Let the $2$-element group $J = \{1, -1\}$ act on $(\C^\times)^2$ as follows:
 \[
 (-1)(b,c) = (-b, -c)
 \]
 and let $(\C^\times)^2 / J $ denote the quotient .   Define
 \[
 s \cdot (b,c) = (ic,ib)
 \]
 where $s$ is the generator of $W^{\fs^0}$.

\begin{lemma}    Let $T^{\fs^0}$ be the Bernstein torus attached to the point $\fs^0$ in the Bernstein spectrum $\mathfrak{B}G^0$.   Then we have a $W^{\fs^0}$-equivariant isomorphism
\[
  T^{\fs^0} \cong (\C^\times)^2 / J, \qquad (a_1: a_2: a_3) \mapsto (a_1/a_3, a_2/a_3).   
\]
\end{lemma}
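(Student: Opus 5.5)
The plan is to identify $T^{\fs^0}$ with the character group of the lattice $M^0/M^0_1$, where $M^0_1$ is the kernel of all unramified characters, and then compute that lattice explicitly. First I will record that $T^{\fs^0}$ is an $X_{\nr}(M^0)$-torsor: $\psi\otimes\sigma^0\cong\sigma^0$ forces $\psi=1$ because $\sigma^0$ is a character, so the stabiliser is trivial. Hence $T^{\fs^0}\cong X_{\nr}(M^0)=\mathrm{Hom}(\Lambda,\C^\times)$, where $\Lambda$ is the image of the valuation map
\[
\nu\colon M^0\to\Z^3,\qquad (x,y,z)\mapsto \bigl(\val_F N_{E_2/F}(x),\ \val_F N_{E_2/F}(y),\ \val_F N_{E_4/F}(z)\bigr).
\]
This is exactly how the homogeneous coordinates $(a_1:a_2:a_3)$ act: $\psi(x,y,z)=a_1^{m_1}a_2^{m_2}a_3^{m_3}$ with $(m_1,m_2,m_3)=\nu(x,y,z)$.

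Second, I will determine $\Lambda$. By (\ref{VAL}), $m_1+m_2+m_3=0$. By Lemma \ref{normE4}(2), together with $v_{E_4}=\val_F\circ N_{E_4/F}$, $m_3$ is even. Hence
\[
\Lambda\subseteq\{(m_1,m_2,m_3)\in\Z^3: m_1+m_2+m_3=0,\ m_3\equiv 0 \pmod 2\}.
\]
For the reverse inclusion I will exhibit preimages of the generators $(1,-1,0)$ and $(1,1,-2)$.
\begin{itemize}
\item For $(1,-1,0)$, take $(\varpi_{E_2},\varpi_{E_2}^{-1},1)$. Since $N_{E_2/F}(\varpi_{E_2})=-\varpi_{E_2}^2=\varpi_F$, the norm-one condition holds.
\item For $(1,1,-2)$, take $(\zeta\varpi_{E_2},\varpi_{E_2},\varpi_{E_4}^{-2})$. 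Using $N_{E_4/F}(\varpi_{E_4})=\zeta\varpi_F$ from the proof of Lemma \ref{normE4}, the product of norms is $\zeta^2\varpi_F\cdot\varpi_F\cdot(\zeta\varpi_F)^{-2}=1$.
\end{itemize}
This surjectivity step is where I expect the only real care to be needed. A naive choice such as $(\varpi_{E_2},\varpi_{E_2},\varpi_{E_4}^{-2}u)$ with a unit $u$ would require $\zeta^2$ to be a fourth power among the Teichmüller units, which fails. The unit correction must therefore be placed in an $E_2$-coordinate, where $N_{E_2/F}(\zeta)=\zeta^2$.

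Third, I will compute the character group. Projecting to the first two coordinates identifies $\Lambda$ with $\Lambda'=\{(m_1,m_2)\in\Z^2:m_1+m_2\equiv 0 \pmod 2\}$, which has index $2$ in $\Z^2$. Setting $b=a_1/a_3$ and $c=a_2/a_3$, the character becomes $(m_1,m_2)\mapsto b^{m_1}c^{m_2}$. Every character of $\Lambda'$ extends to $\Z^2$, since $\C^\times$ is divisible, so the restriction map $(\C^\times)^2=\mathrm{Hom}(\Z^2,\C^\times)\to\mathrm{Hom}(\Lambda',\C^\times)$ is surjective. Its kernel consists of the pairs $(b,c)$ with $b^{m_1}c^{m_2}=1$ whenever $m_1+m_2$ is even. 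Testing on $(2,0)$, $(0,2)$ and $(1,1)$ shows this kernel is $\{(1,1),(-1,-1)\}=J$. Hence $T^{\fs^0}\cong(\C^\times)^2/J$ via $(a_1:a_2:a_3)\mapsto(a_1/a_3,a_2/a_3)$. This map is well defined on homogeneous coordinates because $(1,1,1)$ gives the trivial character.

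Finally, I will check equivariance using (\ref{s-action}). The element $s$ sends $(a_1:a_2:a_3)$ to $(a_2:a_1:-ia_3)$, whose image is
\[
\bigl(a_2/(-ia_3),\ a_1/(-ia_3)\bigr)=(ic,\ ib)=s\cdot(b,c).
\]
This action commutes with $J$, so it descends to the quotient $(\C^\times)^2/J$, which completes the proof.
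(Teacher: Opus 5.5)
Your proposal is correct and follows essentially the paper's route. Both arguments determine the kernel of the coordinate map $(a_1,a_2,a_3)\mapsto\psi$ using the norm-one relation (\ref{VAL}) and the evenness of $v_{E_4}(z)$ from Lemma~\ref{normE4}, and both then check the $s$-action directly. Your version is more complete. The explicit elements $(\varpi_{E_2},\varpi_{E_2}^{-1},1)$ and $(\zeta\varpi_{E_2},\varpi_{E_2},\varpi_{E_4}^{-2})$ of $M^0$ show that the valuation lattice is the full index-$2$ sublattice. That is exactly what the paper's claim that the kernel is \emph{precisely} $\{(\lambda,\lambda,\pm\lambda):\lambda\in\C^\times\}$ requires, and the paper asserts it without proof.
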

\begin{proof}   We shall work with the inverse of this map.   Consider
\begin{align*} f \colon (\C^\times)^3 & \to T^{\fs^0} \\
(a,b,c)  & \mapsto  a^{\val(N_{E_2/F}(x_1))} b^{\val(N_{E_2/F}(x_2))} c^{\val(N_{E_4/F}(x_3))}.
\end{align*}

Note that 
\[
f(a,a,a) = 1
\]
for all $a \in \C^\times$ by (\ref{VAL}).    
  We note that $z$ has even valuation by Lemma \ref{normE4} so $z = \varpi_E^{2m} u$.   Therefore 
\[
\val_F(N_{E_4/F}(z)) =  \val_F(\zeta^{2m} \varpi_F^{2m})  = 2m
\]
and so   
\[
(-1)^{\val(N_{E_4/F}(z))} = 1.
\]
It follows that 
\[
f(b,b, -b) = 1
\]
for all $b \in C^\times$. These two families generate the diagonal $\C^\times$-subgroup and the sign subgroup $J$ hence the kernel is precisely their product. The kernel of $f$ is as required.   
Finally, by (\ref{s-action}), the generator $s$ of $W^{\fs^0}$ acts on the projective coordinates in $(\C^\times)^3 / \C^\times$ as follows:
\[
s \cdot (a : b : c) = (b : a : -  ic) = (ib :ia : c).
\]
\end{proof}

\begin{theorem}[Topology of the depth--zero quotient]\label{prop:Klein}   For the  Bernstein variety, we have $T^{\fs^0} / W^{\fs^0}$ is a complex algebraic variety of dimension $2$.   
   If $a_1, a_2, a_3$ are of modulus $1$ then so are $b,c$.  The quotient of $S^1 \times S^1$ by this equivalence relation is the Klein bottle
$\mathbf{Kb}$.   Therefore we have
\[
T^{\fs^0} / W^{\fs^0}  \quad \textrm{deformation-retracts onto} \quad \mathbf{Kb}.
\]

Let
\[
Y := (\C^\times)^2 / J,
\qquad
J =\{\pm 1\},\quad
(-1)\cdot(b,c)=(-b,-c).
\]
The map
\[
\tau \colon (b,c)\longmapsto (ic,\,ib)
\]
descends to a well-defined involution $\bar\tau$ of $Y$.
On the maximal compact real form, the quotient
\[
Y_{\mathrm{cpt}}/\langle \bar\tau\rangle
\]
is homeomorphic to a Klein bottle.
\end{theorem}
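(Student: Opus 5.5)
The plan is to untwist the torus $Y=(\C^\times)^2/J$ by a change of coordinates that makes both the $J$-quotient and the involution transparent. First I check that $\tau$ descends: $\tau(-b,-c)=(-ic,-ib)=(-1)\cdot\tau(b,c)$, so $\tau$ commutes with $J$. Also $\tau^2(b,c)=(i\cdot ib,\, i\cdot ic)=(-b,-c)$, so $\tau^2\in J$. Hence $\bar\tau$ is an involution of $Y$, and $\langle J,\tau\rangle\cong\Z/4$ acting on $(\C^\times)^2$. Moreover $T^{\fs^0}/W^{\fs^0}\cong Y/\langle\bar\tau\rangle=(\C^\times)^2/\langle\tau\rangle$ by the previous lemma. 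It is a quotient of a smooth affine $2$-dimensional torus by a finite group, hence a normal affine complex algebraic variety of dimension $2$ (ring of invariants).

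Next, I identify $Y$ with $(\C^\times)^2$ via the invariants of $J$. The map $(b,c)\mapsto(w,z):=(bc,\,b/c)$ has kernel exactly $\{\pm(1,1)\}=J$ and is surjective, since $b^2=wz$ and $c^2=w/z$ can be solved consistently. So $Y\cong(\C^\times)^2$. In these coordinates $\tau(b,c)=(ic,ib)$ gives $w\mapsto (ic)(ib)=-bc=-w$ and $z\mapsto c/b=z^{-1}$. Thus $\bar\tau(w,z)=(-w,z^{-1})$, matching the introduction. This action is free, since a fixed point would need $w=-w$. Hence $\fX=(\C^\times)^2/\langle\bar\tau\rangle$ is smooth, and its points correspond to free orbits.

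For the compact form, $|a_j|=1$ forces $|b|=|c|=1$, hence $|w|=|z|=1$. So $Y_{\mathrm{cpt}}=S^1\times S^1$ with $\bar\tau(e^{i\alpha},e^{i\beta})=(e^{i(\alpha+\pi)},e^{-i\beta})$. This is a fixed-point-free involution that is a half-rotation on the first circle and a reflection on the second. It is orientation-reversing because its differential is $\mathrm{diag}(1,-1)$. The quotient is a compact surface with Euler characteristic $0/2=0$, and it is non-orientable. Alternatively, and more concretely, take the fundamental domain $\alpha\in[0,\pi]$: the edge $\alpha=0$ is glued to $\alpha=\pi$ by $\beta\mapsto-\beta$, and $\beta$ is periodic. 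This is exactly the standard Klein bottle gluing, so the quotient is homeomorphic to $\mathbf{Kb}$.

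Finally, for the deformation retraction, use the polar decomposition $\C^\times=S^1\times\R_{>0}$. The retraction $(w,z)\mapsto(w/|w|^t,z/|z|^t)$, $t\in[0,1]$, commutes with $\bar\tau$, because $|z^{-1}|=|z|^{-1}$ and $|-w|=|w|$. It therefore descends to a deformation retraction of $\fX$ onto $S^1\times S^1/\langle\bar\tau\rangle=\mathbf{Kb}$. The main point needing care is the identification of $Y$ with $(\C^\times)^2$ and of the induced action. Once $(w,z)=(bc,b/c)$ is chosen, the rest is routine.
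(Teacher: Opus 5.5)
Your argument is correct, but it is organized differently from the paper's proof.

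The paper stays in the $(b,c)$-angular coordinates on $T=S^1\times S^1$ and works with the order-$4$ group generated by $\tau$, whose square is the $J$-translation. It checks freeness by a congruence computation and obtains $\chi=\chi(T)/4=0$. It reads off orientation reversal from the swap matrix $(\begin{smallmatrix}0&1\\1&0\end{smallmatrix})$ and concludes by the classification of closed surfaces. The algebraic-variety and deformation-retraction clauses of the statement are not argued in that proof; the equivariant retraction only appears later, in Section~8.

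You instead first remove $J$ using the square-root coordinates $(w,z)=(bc,b/c)$. This is exactly the change of variables of Lemma~\ref{lem:bc-wz}, and it converts $\bar\tau$ into $(w,z)\mapsto(-w,z^{-1})$. This buys you several things:
\begin{enumerate}
\item Freeness becomes a one-line check.
\item You can exhibit the Klein bottle by an explicit fundamental-domain gluing instead of appealing to the classification of surfaces.
\item The deformation-retraction clause follows directly from the equivariant polar retraction.
\item The identification with the parameter space $(w,z)\sim(-w,z^{-1})$ of simple $\C[\Gamma,\mu^\cT]$-modules in Theorem~\ref{thm:simples-expanded} is immediate.
\end{enumerate}
The paper's route avoids the coordinate change, at the cost of the order-$4$ bookkeeping modulo $J$ and reliance on surface classification.
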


\begin{proof}
\textbf{$\tau$ becomes an involution on $Y$.}
On $(\C^\times)^2$ one computes
\[
\tau^2(b,c)=(-b,-c).
\]
Since this is precisely the $W$-action, $\tau^2$ is trivial on $Y$,
so $\bar\tau^2=\mathrm{id}$.

\medskip
\textbf{Angular coordinates.}
Restrict to the maximal compact torus
\[
T=S^1\times S^1,\qquad
b=e^{i\theta},\; c=e^{i\phi},\quad
(\theta,\phi)\in\R^2/(2\pi\Z)^2.
\]
The $J$-relation becomes
\[
(\theta,\phi)\sim(\theta+\pi,\phi+\pi),
\]
and the involution acts by
\[
\bar\tau(\theta,\phi)
= (\phi+\tfrac{\pi}{2},\,\theta+\tfrac{\pi}{2}).
\]

\medskip
\textbf{The action is free.}
If $\bar\tau(\theta,\phi)\equiv(\theta,\phi)$ modulo
$(2\pi\Z)^2$ and the $J$-relation, then
\[
\theta-\phi\equiv\tfrac{\pi}{2},\qquad
\phi-\theta\equiv\tfrac{\pi}{2}\pmod{2\pi},
\]
which is impossible. Hence the action is free, and the quotient
is a smooth compact surface without boundary.

\medskip
\textbf{Euler characteristic.}
Since $\bar\tau$ has order $2$ on $Y_{\mathrm{cpt}}$
and the original action on $T$ has order $4$ and is free,
\[
\chi\bigl(Y_{\mathrm{cpt}}/\langle\bar\tau\rangle\bigr)
= \chi(T)/4 = 0.
\]
Thus the quotient is either a torus or a Klein bottle.

\medskip
\textbf{Non-orientability.}
In angular coordinates the derivative of $\bar\tau$ is the matrix
\[
\begin{pmatrix}0&1\\[2pt]1&0\end{pmatrix},
\]
which has determinant $-1$; hence $\bar\tau$ is
orientation-reversing.
A free action containing an orientation-reversing element
produces a non-orientable closed surface.

\medskip
\textbf{Conclusion.}
A connected closed surface with Euler characteristic $0$
that is non-orientable is a Klein bottle.
\end{proof}

\section{Group cohomology} 
\subsubsection{The integers $\Z$}   
\begin{lemma}\label{G0}  We have $H^2(\Z, \C^\times) = 0$.
\end{lemma}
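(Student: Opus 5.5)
To prove Lemma~\ref{G0}, I will compute $H^2(\Z,\C^\times)$ with the standard free resolution of the trivial module over $\Z[\Z]=\Z[t,t^{-1}]$. This resolution has length one:
\[
0\longrightarrow \Z[t,t^{-1}]\xrightarrow{\;t-1\;}\Z[t,t^{-1}]\longrightarrow \Z\longrightarrow 0 .
\]
Exactness holds because $t-1$ is a non-zero-divisor, and the cokernel of multiplication by $t-1$ is $\Z$ via the augmentation. Applying $\mathrm{Hom}_{\Z[\Z]}(-,\C^\times)$ gives a two-term cochain complex. For the trivial coefficient module $\C^\times$ this complex is
\[
\C^\times\xrightarrow{\;0\;}\C^\times .
\]
Every cohomology group in degree $\ge 2$ therefore vanishes, since the complex has no terms there. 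In particular $H^2(\Z,\C^\times)=0$.

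Alternatively, and closer to the language of cocycles used later for $\mu^{\mathcal{T}}$, I can argue via central extensions. $H^2(\Z,\C^\times)$ classifies central extensions $1\to\C^\times\to E\to\Z\to 1$. Any such extension splits: I lift the generator $1\in\Z$ to some $e\in E$, and $n\mapsto e^n$ is then a homomorphic section, because $\Z$ is free. A split central extension with abelian quotient is trivial, so its class is zero.

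The same statement also follows from the fact that $\Z$ has cohomological dimension $1$, being free, or from viewing $\Z=\pi_1(S^1)$ with $S^1$ a one-dimensional $K(\Z,1)$.

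There is essentially no obstacle here. The one point to be careful about is that the coefficients are the trivial module $\C^\times$, written multiplicatively. The vanishing does not use divisibility of $\C^\times$; it uses only the length of the resolution. Divisibility of $\C^\times$ will matter later, for $H^2(\Z/2,\C^\times)$ and for the Künneth-type computation of $H^2(\Z/2\times\Z,\C^\times)$.
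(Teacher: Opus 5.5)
Your proof is correct and is essentially the paper's argument. The paper simply cites Weibel's Corollary~6.2.7, the length-one free resolution for a free group; you write that resolution out explicitly for one generator, $0\to\Z[t^{\pm1}]\xrightarrow{t-1}\Z[t^{\pm1}]\to\Z\to 0$. Your central-extension and cohomological-dimension remarks are valid alternatives but are not needed.
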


\begin{proof}  This follows from Corollary 6.2.7 in Weibel \cite{W} since $\Z$ is the free group on a point.  
 \end{proof}

\subsubsection{Cyclic group of order $2$}
Let $C_2=\Z/2\Z=\langle s\mid s^2=1\rangle$.

\begin{lemma}\label{G1}
$H^2(C_2,\mathbb{C}^\times)=0$.
\end{lemma}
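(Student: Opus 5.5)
We compute $H^2(C_2,\C^\times)$ directly from the periodic resolution for a finite cyclic group, with $C_2$ acting trivially on $\C^\times$. For a cyclic group $C_n=\langle s\rangle$ there is the standard $2$-periodic free resolution of $\Z$ over $\Z[C_n]$, with differentials alternating between multiplication by $s-1$ and by the norm element $N=1+s+\cdots+s^{n-1}$. Applying $\mathrm{Hom}_{\Z[C_n]}(-,A)$ for a trivial module $A$ gives
\[
H^2(C_n,A)\cong \widehat{H}^0(C_n,A)= A^{C_n}/N A = A/nA
\]
(see Weibel \cite{W}, \S6.2). We will use this identification; it is the same reference invoked in the proof of Lemma \ref{G0}.

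With $n=2$ and $A=\C^\times$ written multiplicatively, the norm map is $a\mapsto a\cdot s(a)=a^2$. Therefore
\[
H^2(C_2,\C^\times)\cong \C^\times/(\C^\times)^2 .
\]
This quotient is trivial because $\C$ is algebraically closed, so every nonzero complex number has a square root. Hence $H^2(C_2,\C^\times)=0$.

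As a concrete cross-check, which also helps with the later sections, we can verify the vanishing at the level of cocycles. Let $\mu$ be a normalised $2$-cocycle on $C_2$, so the only possibly nontrivial value is $\mu(s,s)=\lambda\in\C^\times$. In the twisted group algebra $\C[C_2,\mu]$ we have $T_s^2=\lambda$. Choose $\beta\in\C^\times$ with $\beta^2=\lambda$ and rescale $T_s'=\beta^{-1}T_s$. Then $(T_s')^2=1$, so $\mu$ is the coboundary of the function $s\mapsto\beta$.

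There is no real obstacle here. The only points that need care are checking that the action of $C_2$ on $\C^\times$ is trivial, so that the norm map is indeed squaring, and keeping the multiplicative notation straight when writing $A/nA$ as $\C^\times/(\C^\times)^2$.
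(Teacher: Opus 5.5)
Your proof is correct and is essentially the paper's argument: both use Weibel's computation $H^2(C_n,A)\cong A/N(A)$ for trivial coefficients, reduce to $\C^\times/(\C^\times)^2$, and conclude because every nonzero complex number is a square. Your cocycle-level check, rescaling $T_s'=\beta^{-1}T_s$ with $\beta^2=\mu(s,s)$, is also correct and is an extra that the paper does not include.
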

\begin{proof}  For a cyclic group $C_n$ acting trivially on an abelian group $A$, one has \cite[Theorem 6.2.2]{W}
\[
H^2(C_n,A)\cong A/N(A),
\qquad N(a)=a\cdot (s\cdot a)\cdots(s^{n-1}\cdot a).
\]
For $n=2$ and $A=\mathbb{C}^\times$, the norm is $N(z)= z \cdot s(z) = z^2$, hence
\[
H^2(C_2,\mathbb{C}^\times)\cong \mathbb{C}^\times/(\mathbb{C}^\times)^2.
\]
Each coset is of the form $z(\C^\times)^2$ with $z \neq 0$.   But $z(\C^\times)^2 = \C^\times$ and so  the result follows.
\end{proof}

\subsubsection{The infinite dihedral group $D_\infty=\Z\rtimes \Z/2$}
%Let\[D_\infty=\Z\rtimes \Z/2 =\langle r,s\mid s^2=1,\ srs^{-1}=r^{-1}\rangle,\]where the generator $s$ of $\Z/2$ acts on $\Z=\langle r\rangle$ by inversion $r\mapsto r^{-1}$.

\begin{lemma}\label{G2}
$H^2(D_\infty,\mathbb{C}^\times)=0$.
\end{lemma}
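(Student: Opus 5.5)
The plan is to use the free product decomposition $D_\infty \simeq \Z/2 \ast \Z/2$ from the introduction, together with Lemma~\ref{G1}. For a free product $A\ast B$ acting trivially on a module $M$, the Mayer--Vietoris sequence in group cohomology (see \cite{W}, or Brown's \emph{Cohomology of Groups}, II.7) gives, in degrees $n\ge 2$,
\[
H^n(A\ast B, M)\cong H^n(A,M)\oplus H^n(B,M).
\]
Geometrically, this comes from the fact that $K(A,1)\vee K(B,1)$ is a $K(A\ast B,1)$. Taking $A=B=C_2$ and $M=\C^\times$, Lemma~\ref{G1} gives $H^2(D_\infty,\C^\times)\cong H^2(C_2,\C^\times)^2=0$.

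As a cross-check, and to stay close to the elementary tools already used, I would also give a second argument using the semidirect product $D_\infty=\Z\rtimes C_2$. This goes through the Lyndon--Hochschild--Serre spectral sequence
\[
E_2^{p,q}=H^p\bigl(C_2,H^q(\Z,\C^\times)\bigr)\Rightarrow H^{p+q}(D_\infty,\C^\times).
\]
By Lemma~\ref{G0}, together with $\mathrm{cd}(\Z)=1$, only the rows $q=0,1$ are nonzero. The $E_2$ terms contributing to total degree $2$ are:
\begin{itemize}
\item $E_2^{2,0}=H^2(C_2,\C^\times)=0$, by Lemma~\ref{G1};
\item $E_2^{1,1}=H^1(C_2,H^1(\Z,\C^\times))$, where $H^1(\Z,\C^\times)=\mathrm{Hom}(\Z,\C^\times)\cong\C^\times$;
\item $E_2^{0,2}=0$.
\end{itemize}

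In the module $H^1(\Z,\C^\times)\cong\C^\times$, the generator $s$ acts by inversion $z\mapsto z^{-1}$, because $srs=r^{-1}$. The main point to check is therefore $H^1(C_2,\C^\times_{\mathrm{inv}})$ for this inversion action. Crossed homomorphisms are determined by $z=f(s)$ subject to $f(s^2)=z\cdot s(z)=z z^{-1}=1$, so every $z\in\C^\times$ is allowed. Principal ones are of the form $s(w)w^{-1}=w^{-2}$. Hence
\[
H^1(C_2,\C^\times_{\mathrm{inv}})\cong\C^\times/(\C^\times)^2=0,
\]
since $\C$ is algebraically closed. This is the same divisibility argument as in Lemma~\ref{G1}. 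All $E_2$ terms in total degree $2$ vanish, so $H^2(D_\infty,\C^\times)=0$.

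The only step needing care is identifying the $C_2$-action on $H^1(\Z,\C^\times)$ correctly, including the sign and the trivial action on the coefficients. In the free-product route, the corresponding care is citing the Mayer--Vietoris isomorphism precisely in degree $2$. I expect no real obstacle: in both routes the vanishing ultimately reduces to the divisibility of $\C^\times$.
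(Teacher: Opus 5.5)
Your proposal is correct, and its main argument is exactly the paper's: write $\mathrm{D}_\infty\cong\Z/2\ast\Z/2$, apply the free-product splitting $H^n(A\ast B,M)\cong H^n(A,M)\oplus H^n(B,M)$ for $n\ge 2$ (the paper cites Corollary~6.2.10 of \cite{W}), and conclude with Lemma~\ref{G1}. Your Lyndon--Hochschild--Serre cross-check via $\Z\rtimes\Z/2$ is also valid — the inversion action on $H^1(\Z,\C^\times)$ and $H^1(C_2,\C^\times_{\mathrm{inv}})\cong\C^\times/(\C^\times)^2=0$ are both right — but it is not needed.
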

\begin{proof}
Recall that the infinite dihedral group admits the presentation
\[
\mathrm{D}_\infty=\langle s,t \mid s^2=t^2=1\rangle \cong (\Z/2)\ast(\Z/2),
\]
\textit{i.e.}, it is the free product of two copies of $\Z/2$.
%Let $A$ be a trivial $D_\infty$--module. A standard topological model for the classifying space of a free product is a wedge:\[B(G*H)\simeq BG \,\vee\, BH.
%\]Since reduced cohomology of a wedge splits, for every $n\ge 2$ one has\[H^n(G*H,A)\cong H^n(G,A)\oplus H^n(H,A).\]
We now apply Corollary 6.2.10 in \cite{W}: 
%Applying this with $G=H=\Z/2$ and $A=\mathbb{C}^\times$ gives
\begin{eqnarray*}
H^2(\mathrm{D}_\infty,\mathbb{C}^\times) & \cong & H^2(\Z/2,\mathbb{C}^\times)\oplus H^2(\Z/2,\mathbb{C}^\times)\\
& = &0
\end{eqnarray*}
by Lemma \ref{G1}. 
\end{proof}

\subsubsection{The group $\Z/2\times \Z$}   Let $N= \Z/2 \times \Z$.    We consider group cohomology with coefficients in $\C^\times$ with trivial
$N$--action.  Write elements of $N$ as $(\varepsilon,m)$ with $\varepsilon\in\{0,1\}$ and $m\in\Z$.

Since $\C^\times$ is a divisible abelian group, the universal coefficient
theorem identifies
\[
H^2(N,\C^\times)\cong \mathrm{Hom}(H_2(D,\Z),\C^\times).
\]

Using the K\"unneth formula for group homology \cite[\S3.5]{W}, 
\[
H_2(\Z/2\times \Z,\Z)
\cong H_1(\Z/2,\Z)\otimes H_1(\Z,\Z)
\cong (\Z/2)\otimes \Z
\cong \Z/2,
\]
since $H_2(\Z/2,\Z)=H_2(\Z,\Z)=0$,
$H_1(\Z/2,\Z)=\Z/2$, and $H_1(\Z,\Z)=\Z$.

Therefore
\[
\boxed{
H^2(\Z/2\times \Z,\C^\times)\cong \Z/2.
}
\]
Hence there are exactly two cohomology classes: the trivial class and a unique
nontrivial class.

%We note that the explicit cocycle arising below from normalized intertwining operators differs from the abstract cocycle of \S3 by a coboundary. In particular both define the same class in $H^2(\Gamma,\C^\times)$ and hence the same twisted group algebra up to
%isomorphism.

\subsubsection{The preferred cocycle} Our goal is to describe the relevant class in
\[
H^2(N,\C^\times).
\]

A projective representation of $\Gamma$ determines a
$2$-cocycle only up to coboundary.
Different normalizations of generators therefore lead to
cohomologous cocycles.

%\subsubsection{Nontriviality of the cocycle $\mu$ on $\mathbb Z/2\times\mathbb Z$}

Let $N=\mathbb Z/2\times \mathbb Z$, written additively as pairs $(\varepsilon,m)$ with
$\varepsilon\in\{0,1\}$ and $m\in\mathbb Z$, and group law
\[
(\varepsilon,m)+(\delta,n)=(\varepsilon+\delta,\;m+n).
\]
We consider group cohomology with coefficients in $\mathbb C^\times$ and trivial $N$--action.  For compatibility with the normalization of intertwining operators
used later (Section~4), we adopt the cocycle
\[
\mu\bigl((\varepsilon,m),(\delta,n)\bigr)
\;=\;
(-1)^{\varepsilon n}.
\]

\begin{lemma}
The function $\mu$ is a normalized $2$--cocycle on $N$.
\end{lemma}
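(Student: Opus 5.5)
The plan is to verify directly the two defining properties: normalization and the $2$-cocycle identity for trivial coefficients. In multiplicative form the identity reads
\[
\mu(x,y)\,\mu(x+y,z)=\mu(y,z)\,\mu(x,y+z),\qquad x,y,z\in N .
\]
We write $x=(\varepsilon,m)$, $y=(\delta,n)$, $z=(\gamma,p)$. The guiding observation is that $\mu$ is bimultiplicative, which makes the cocycle identity automatic. We also have to check that the formula is well defined even though $\varepsilon$ is only a class mod $2$.

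\textbf{Well-definedness.} The exponent $\varepsilon n$ depends on a representative of $\varepsilon\in\Z/2$. Replacing $\varepsilon$ by $\varepsilon+2$ changes the exponent by $2n$, so $(-1)^{\varepsilon n}$ is unchanged. Thus $\mu$ is a genuine function $N\times N\to\C^\times$ with values in $\{\pm1\}$.

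\textbf{Bimultiplicativity.} The map $(x,y)\mapsto(-1)^{\varepsilon n}$ is the composite of the bilinear pairing
\[
(\Z/2\times\Z)\times(\Z/2\times\Z)\to\Z/2,\qquad \bigl((\varepsilon,m),(\delta,n)\bigr)\mapsto \varepsilon n \bmod 2,
\]
with the character $\Z/2\to\{\pm1\}$. Hence
\[
\mu(x+y,z)=\mu(x,z)\mu(y,z)\quad\text{and}\quad \mu(x,y+z)=\mu(x,y)\mu(x,z).
\]
Substituting these into the two sides of the cocycle identity gives $\mu(x,y)\mu(x,z)\mu(y,z)$ on both sides, which proves the identity. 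If desired, this can also be checked term by term from the exponents: the left side has exponent $\varepsilon n+(\varepsilon+\delta)p$ and the right side has exponent $\delta p+\varepsilon(n+p)$, and these are equal.

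\textbf{Normalization.} We have $\mu((0,0),y)=(-1)^{0}=1$ and $\mu(x,(0,0))=(-1)^{\varepsilon\cdot 0}=1$.

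There is no real obstacle in this argument. The only point requiring care is the mod-$2$ well-definedness of the exponent, which the parity check above settles. Nontriviality of the class of $\mu$ in $H^2(N,\C^\times)\cong\Z/2$ is not part of this statement. It would follow later from the relation $\mu(s,r)/\mu(r,s)=-1$, since a commutator ratio of this kind is a cohomology invariant on an abelian group.
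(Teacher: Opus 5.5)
Your proposal is correct and is essentially the paper's proof: the paper checks normalization directly and verifies the cocycle identity by comparing the exponents $\varepsilon n+(\varepsilon+\delta)p$ and $\delta p+\varepsilon(n+p)$, which is exactly your term-by-term check. Your bimultiplicativity framing and your check that $(-1)^{\varepsilon n}$ is well defined modulo $2$ are harmless refinements that the paper leaves implicit.
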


\begin{proof}
Normalization is immediate: if one entry is $(0,0)$ then the exponent is $0$ and $\mu=1$.

For the cocycle identity, take
$g=(\varepsilon,m)$, $h=(\delta,n)$, $k=(\gamma,p)$.
Then
\[
\mu(g,h)\mu(g+h,k)
=
(-1)^{\varepsilon n}\,(-1)^{(\varepsilon+\delta)p}
=
(-1)^{\varepsilon n+\varepsilon p+\delta p},
\]
while
\[
\mu(h,k)\mu(g,h+k)
=
(-1)^{\delta p}\,(-1)^{\varepsilon(n+p)}
=
(-1)^{\delta p+\varepsilon n+\varepsilon p}.
\]
These are equal, so $\mu$ is a $2$--cocycle.
\end{proof}

\begin{lemma}
The cohomology class $[\mu]\in H^2(N,\mathbb C^\times)$ is nontrivial.
\end{lemma}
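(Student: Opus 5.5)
The standard way to show $[\mu]$ is nontrivial is to exhibit an invariant of $2$-cocycles that kills every coboundary but not $\mu$. On an abelian group $N$ with trivial coefficients, the natural choice is the commutator pairing
\[
\omega_\mu(g,h) \;=\; \mu(g,h)\,\mu(h,g)^{-1}, \qquad g,h\in N.
\]
The plan has three short steps.

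\textbf{Step 1: coboundaries have trivial pairing.} If $\mu'=\mu\cdot d\beta$ with $\beta\colon N\to\C^\times$ and
\[
d\beta(g,h)=\beta(g)\beta(h)\beta(g+h)^{-1},
\]
then $d\beta$ is symmetric in $g,h$ because $N$ is abelian. Hence $\omega_{\mu'}=\omega_\mu$, so $\omega$ depends only on the cohomology class, and for the trivial class $\omega\equiv 1$.

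\textbf{Step 2: $\mu$ has nontrivial pairing.} Take $g=(1,0)$ and $h=(0,1)$. Then $\mu(g,h)=(-1)^{1\cdot 1}=-1$, while $\mu(h,g)=(-1)^{0\cdot 0}=1$. Thus $\omega_\mu(g,h)=-1\neq 1$, and $[\mu]\neq 0$.

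\textbf{Alternative phrasing via the twisted algebra.} In $\C[N,\mu]$ the basis elements satisfy $T_gT_h=\mu(g,h)T_{g+h}$, which gives
\[
T_{(1,0)}T_{(0,1)}=-\,T_{(0,1)}T_{(1,0)}.
\]
So $\C[N,\mu]$ is noncommutative. If $\mu$ were a coboundary, rescaling $T_g\mapsto \beta(g)T_g$ would identify $\C[N,\mu]$ with the commutative algebra $\C[N]$, a contradiction. This version fits well with the later use of the cocycle in the presentation of $\cH^0$.

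There is no real obstacle here. The only point to check is that coboundaries are symmetric, which holds because $N$ is abelian and acts trivially on $\C^\times$. Combined with the earlier computation $H^2(\Z/2\times\Z,\C^\times)\cong\Z/2$, this also shows that $[\mu]$ is the unique nontrivial class.
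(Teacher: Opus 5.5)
Your proof is correct and is essentially the paper's argument. The paper likewise uses the commutator bicharacter $b_\alpha(g,h)=\alpha(g,h)/\alpha(h,g)$, notes that coboundaries are symmetric because $N$ is abelian with trivial action, and evaluates at $(1,0)$ and $(0,1)$ to get $-1$; your alternative via the anticommutation $T_{(1,0)}T_{(0,1)}=-T_{(0,1)}T_{(1,0)}$ in $\C[N,\mu]$ is the same invariant seen algebraically, matching the relation $N_sN_Y=-N_YN_s$ used later in the paper.
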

\begin{proof}
For a $2$--cocycle $\alpha$ on an abelian group, define its commutator bicharacter
\[
b_\alpha(g,h):=\frac{\alpha(g,h)}{\alpha(h,g)}.
\]
If $\alpha=\partial f$ is a coboundary, then
\[
\partial f(g,h)=\frac{f(g)f(h)}{f(g+h)}=\partial f(h,g),
\]
so $b_{\partial f}(g,h)=1$ for all $g,h$.

Now compute $b_\mu$ on the generators $a=(1,0)$ of $\mathbb Z/2$ and $t=(0,1)$ of $\mathbb Z$:
\[
\mu(a,t)=(-1)^{1\cdot 1}=-1,\qquad
\mu(t,a)=(-1)^{0\cdot 0}=1.
\]
Hence
\[
b_\mu(a,t)=\frac{\mu(a,t)}{\mu(t,a)}=-1\neq 1.
\]
Therefore $\mu$ is not a coboundary, i.e.\ $[\mu]\neq 0$ in $H^2(G,\mathbb C^\times)$.
\end{proof}

%\subsubsection{Relation with other normalizations} Another natural cocycle on $G$ is\[\kappa\bigl((\varepsilon,m),(\delta,n)\bigr)=(-1)^{m\delta}.\]
%which arises from a different choice of section of the semidirect product. The cocycles $\kappa$ and $\mu$ are cohomologous: they differ by acoboundary coming from a $1$-cochain $b: G\to\{\pm1\}$.
%Consequently they define the same class in \[H^2(G,\C^\times),\]and hence yield isomorphic twisted group algebras.

In the remainder of the paper we use the cocycle $\mu$, since it matches
the normalization arising from intertwining operators in Section~4.

\subsubsection{The cocycle  $\mu^{\mathcal{T}}$ from \cite{AFO}}   The cocycle $\mu^{\mathcal{T}}$ has the following properties:
\begin{itemize}
\item it is trivial on $\Z_A$ and $\Z_B$ by Lemma   \ref{G0}
\item  it is trivial on the infinite dihedral group $\Z \rtimes \Z/2$ by Lemma \ref{G2}
\end{itemize}

Consider now the restriction of $\mu^{\mathcal{T}}$ to the subgroup 
\[
\Z/2 \times \Z \subset \Z \rtimes \Z/2 \times \Z.   
\]
It must be non-trivial by \cite[Corollary 3.6]{AFO}.   On the other hand, $\Z/2 \times \Z$ admits a unique nontrivial cohomology class.   Therefore we must have
\[
\mu^{\mathcal{T}}|_{\Z/2 \times \Z} = \mu,
\]
that is to say
\begin{align*}
\mu^{\mathcal{T}}((m_1,\varepsilon, n_1), (m_2, \delta, n_2)) &= \mu((\varepsilon, n_1), (\delta, n_2))\\
& = (-1)^{\varepsilon n_2}
\end{align*}
for all $m_1, m_2, n_1, n_2,  \in \Z$ and $\varepsilon, \delta  \in \Z/2$.   We shall use this formula in the next section.

\section{The twisted group algebra $\C[\Gamma, \mu^\cT]$ of Adler-Fintzen-Ohara}

We consider the countable discrete group
\[
\Gamma : = (\Z \rtimes \Z/2) \times \Z
\]
where the generator $s$ of $\Z/2$ acts as $s\cdot n = -n$.   We shall sometimes write
\[
\Gamma = (\Z_A \rtimes \Z/2) \times \Z_B
\]
in order to distinguish the two copies of $\Z$.   

The explicit cocycle $\mu = \mu^{\mathcal{T}}$ is given by
\[
\boxed{
\mu((m_1, \varepsilon, n_1), (m_2, \delta, n_2)) = (-1)^{\varepsilon \cdot n_2}
}
\]
with
 \[
 m_1, m_2 \in \Z_A \quad \quad \varepsilon, \delta \in \{0,1\}  \quad \quad n_1, n_2 \in \Z_B. 
 \] 

Note that 
\[
\mu |_{\Z_A \rtimes \Z/2} = 1,  \qquad \mu |_{\Z_B} = 1, \qquad \mu | _ {\Z/2} = 1.
\]

This is consistent with \S2.    The cocycle $\mu$ can be non-trivial  only when restricted to $\Z/2 \times \Z_B$.   
The twisted group algebra is $\C[\Gamma,\mu]$.   We shall write
\begin{align}\label{AB}
\C[\Z_A] &= \C[X, X^{-1}] \\
\C[\Z_B] &= \C[Y, Y^{-1}].
\end{align}

After the identification  (\ref{AB}) we have 
\begin{eqnarray}
s \cdot X = X^{-1}.   
\end{eqnarray}

The cocycle $\mu$, restricted to $\Z/2 \times \Z_B$,  is given by 
\[
\boxed{
\mu((\varepsilon, m), (\delta, n)): = (-1)^{\varepsilon n},
}
\] 
where $\varepsilon, \delta \in \{0,1\}$ and $m,n \in \Z$.     

We contemplate the pair $(s,Y)$.  We have 
\[
s = (1,0) \in \Z/2 \times_\mu \Z_B.
\]
   After the identification (\ref{AB}) we have 
   \[
   Y = (0,1) \in \Z/2 \times_\mu \Z_B.
   \]
   
   \begin{lemma}\label{sY} The algebra $\C[\Z/2 \times_\mu \Z]$ is a noncommutative $\C$-algebra with unit.   It admits generators $N_s, N_Y$ with the multiplication rule 
\[
N_s N_Y = \mu(s,Y)N_{sY}.
\]
The generators are $N_s, N_Y$ with relations
\[
N_s^2 = 1, \quad \quad N_sN_Y = - N_YN_s.
\]
   \end{lemma}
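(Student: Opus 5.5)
We will work directly from the definition of the twisted group algebra. As a vector space, $\C[N,\mu]$ with $N=\Z/2\times\Z$ has basis $\{N_g : g\in N\}$. The multiplication is $N_gN_h=\mu(g,h)N_{gh}$, and associativity is guaranteed by the cocycle identity already verified for $\mu\bigl((\varepsilon,m),(\delta,n)\bigr)=(-1)^{\varepsilon n}$. The unit is $N_{(0,0)}$, because $\mu$ is normalized. We write $s=(1,0)$ and $Y=(0,1)$. The plan has three steps: derive the two relations, show $N_s,N_Y$ generate, and show no further relations are needed.

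\emph{The relations.} We evaluate the product rule on the generators.
\begin{itemize}
\item From $\mu(s,s)=(-1)^{1\cdot 0}=1$ we get $N_s^2=N_{(0,0)}=1$.
\item From $\mu(s,Y)=(-1)^{1\cdot1}=-1$ we get $N_sN_Y=-N_{sY}$.
\item From $\mu(Y,s)=(-1)^{0\cdot 0}=1$ we get $N_YN_s=N_{Ys}=N_{sY}$.
\end{itemize}
Comparing the last two gives $N_sN_Y=-N_YN_s$. In particular the algebra is noncommutative, since $N_{sY}\neq 0$ is a basis vector. This is the statement $b_\mu(s,Y)=-1$ from the preceding lemma on nontriviality of $[\mu]$.

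\emph{Generation.} We have $\mu(Y^a,Y^b)=(-1)^{0}=1$, so $N_Y^n=N_{(0,n)}$ for all $n\in\Z$. In particular $N_Y$ is invertible with inverse $N_{(0,-1)}$. Also $\mu(s,(0,n))=(-1)^n$, so $N_sN_Y^n=(-1)^nN_{(1,n)}$. Hence every basis element is $\pm$ a word in $N_s$ and $N_Y^{\pm1}$, and the two elements generate.

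\emph{No further relations.} Let $A$ be the abstract algebra $\C\langle u,v,v^{-1}\rangle/(u^2=1,\ uv=-vu)$. There is a surjection $A\to\C[N,\mu]$ sending $u\mapsto N_s$ and $v\mapsto N_Y$. Using the relations, every element of $A$ reduces to a combination of the monomials $v^n$ and $uv^n$ with $n\in\Z$. So $A$ is spanned by these monomials. Their images $N_{(0,n)}$ and $\pm N_{(1,n)}$ form a basis of $\C[N,\mu]$, so the spanning set is linearly independent and the map is an isomorphism.

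None of these steps is hard; the only point needing care is this spanning argument. It amounts to using $uv=-vu$ to move every $u$ to the left, and $u^2=1$ to reduce the power of $u$ to $0$ or $1$.
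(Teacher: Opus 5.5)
Your proposal is correct and matches the paper's proof for the relations: the paper also just evaluates $\mu(s,s)=1$, $\mu(s,Y)=-1$, $\mu(Y,s)=1$ and compares $N_sN_Y=-N_{sY}$ with $N_YN_s=N_{Ys}=N_{sY}$. Your two further steps go beyond what the paper checks. First, you show that $N_s,N_Y^{\pm1}$ generate, via $N_sN_Y^n=(-1)^nN_{(1,n)}$; you are right that $N_Y^{-1}$ must be included among the generators. Second, your spanning/basis argument shows the two relations give a complete presentation, which the paper asserts without proof.
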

   
   \begin{proof}   The unit is $N_{(0,0)}$.   We have 
   \begin{eqnarray*}
   N_s^2 &=& N_{(1,0)}N_{(1,0)}\\
   & = &\mu((1,0),(1,0)))N_{(0,0)}\\
   & = &1.
   \end{eqnarray*}
   
   We also have
   \begin{eqnarray*}
   N_sN_Y &=& \mu(s,Y)N_{sY}\\
   &=& - N_{sY}\\
   &=& - N_{Ys}
   \end{eqnarray*}
   and
   \begin{eqnarray*}
   N_YN_s &=& \mu(Y,s)N_{Ys}\\
   &= & N_{Ys}
   \end{eqnarray*}
   as required.   
   \end{proof}
   
   \begin{corollary}  Working in the group algebra $\C[\Z/2 \times_\mu \Z_B]$ we have, by Lemma \ref{sY}, 
\begin{eqnarray*}
N_sN_YN_s^{-1} &= &- N_YN_sN_s^{-1}\\
&=& - N_Y 
\end{eqnarray*}
so that 
\begin{eqnarray}
s \cdot Y = - Y
\end{eqnarray}
precisely as in Goldberg-Roche \cite[\S11.8]{GR2}.
\end{corollary}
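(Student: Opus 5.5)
The statement to prove is that $N_sN_YN_s^{-1}=-N_Y$ in $\C[\Z/2\times_\mu\Z_B]$, i.e.\ that $s\cdot Y=-Y$. My plan is to derive this directly from the two relations of Lemma~\ref{sY}, namely $N_s^2=1$ and $N_sN_Y=-N_YN_s$. There is nothing geometric to do; the only work is associativity in the twisted group algebra.

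First, since $N_s^2=N_{(0,0)}$ is the unit, $N_s$ is invertible and $N_s^{-1}=N_s$. I will note explicitly that this uses $\mu(s,s)=(-1)^{1\cdot 0}=1$. Second, I right-multiply the anticommutation relation $N_sN_Y=-N_YN_s$ by $N_s^{-1}$. Using associativity of $\C[\Gamma,\mu]$, which is exactly what the cocycle identity proved in the previous section guarantees, this gives
\[
N_sN_YN_s^{-1}=-N_YN_sN_s^{-1}=-N_Y .
\]
Third, I interpret conjugation by $N_s$ as the action of $s$ on $\C[\Z_B]=\C[Y,Y^{-1}]$, identifying $N_Y$ with $Y$. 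This yields $s\cdot Y=-Y$, and hence $s\cdot Y^n=(-1)^nY^n$, because $(N_sN_YN_s^{-1})^n=N_sN_Y^nN_s^{-1}$ and $N_Y^n=N_{Y^n}$, since $\mu$ is trivial on $\Z_B$.

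To match Goldberg--Roche \cite[\S11.8]{GR2}, I would observe that there $s$ acts on the coordinate of the $\Z_B$-direction by $w\mapsto -w$. This is consistent with the involution $(w,z)\mapsto(-w,z^{-1})$ from the introduction, once $s\cdot X=X^{-1}$ is taken into account.

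The only point needing care, and the main obstacle, is the bookkeeping of the cocycle values: one must confirm that $\mu(Y,s)=1$ while $\mu(s,Y)=-1$ with the chosen normalization, and that $N_{sY}=N_{Ys}$ because $N$ is abelian. Both facts were already checked in the proof of Lemma~\ref{sY}, so I would simply cite them.
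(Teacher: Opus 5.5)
Your proposal is correct and follows the paper's argument: you right-multiply the relation $N_sN_Y=-N_YN_s$ of Lemma~\ref{sY} by $N_s^{-1}$ (which equals $N_s$ since $N_s^2=1$) to get $N_sN_YN_s^{-1}=-N_Y$, i.e.\ $s\cdot Y=-Y$, matching $t(\epsilon)(Y)=-Y$ in \cite[\S11.8]{GR2}. Your additional checks (that $\mu(s,s)=1$, and the extension $s\cdot Y^n=(-1)^nY^n$) are accurate but go beyond what the paper writes.
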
 

%Following \cite[\S3]{AFO} we define the element $\widetilde{s}$ of the group \[G^0(F)  = (\GL_2(E_2) \times \GL_1(E_4)) \cap \SL_8(F) \supset \SL_2(E_2) \times \SL_1(E_4)\]by \[
%\widetilde{s} = ((\begin{smallmatrix}0 & 1\\-1 & 0\end{smallmatrix}),1).\]Conjugation by $\widetilde{s}$ will permute the entries in the diagonal subgroup of $\GL_2(E_2)$ and so will be the non-trivial element in the Weyl group $W(G^0, M^0)$.   
%In the positive-depth case \cite{GR} and the depth-zero case \cite{AFO} we have $T^\fs$ is a complex torus of dimension two.

\section{Simple modules of $\C[\Gamma, \mu^\cT]$}   
\begin{theorem}\label{thm:simples-expanded}
We have, for the twisted AFO-algebra,
\[
\C[\Z\rtimes\Z/2 \times \Z, \mu]
\;\cong\;
\C[s,X,X^{-1},Y,Y^{-1}]
\Big/\bigl(s^2-1,\; sY+Ys,\; sX-X^{-1}s,\; XY-YX\bigr),
\]
and the simple modules are the $2$--dimensional modules
\[
M_{w,z}\cong \C^2,\qquad
Y\mapsto\begin{pmatrix} w&0\\0&-w\end{pmatrix},\quad
s\mapsto\begin{pmatrix}0&1\\1&0\end{pmatrix},\quad
X\mapsto\begin{pmatrix} z&0\\0&z^{-1}\end{pmatrix},
\]
with parameters $(w,z)\in(\C^\times)^2$, satisfying
\[
M_{w,z}\;\cong\; M_{-w,z^{-1}},
\]
and every simple module is isomorphic to one of these.
%Consequently the parameter space for simple modules is \[(\C^\times\times\C^\times)\big/\bigl((w,z)\sim(-w,z^{-1})\bigr).\]
The primitive spectrum of $\C[\Gamma, \mu^{\mathcal{T}}]$  is naturally identified with $(\C^\times \times \C^\times) / (w,z) \sim (-w, z^{-1})$.   
In particular, the primitive spectrum is irreducible.
\end{theorem}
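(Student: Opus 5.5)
The plan has three stages: present the algebra, find all finite-dimensional simple modules by an eigenvector argument, and then read off the primitive spectrum from Clifford theory.

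\emph{Presentation.} By the cocycle formula $\mu((m_1,\varepsilon,n_1),(m_2,\delta,n_2))=(-1)^{\varepsilon n_2}$, the algebra $\C[\Gamma,\mu]$ has basis $N_{(m,\varepsilon,n)}$. Put $X=N_{(1,0,0)}$, $Y=N_{(0,0,1)}$, $s=N_{(0,1,0)}$. Since $\mu$ is trivial on $\Z_A\rtimes\Z/2$, on $\Z_B$, and between $\Z_A$ and $\Z_B$, we get $s^2=1$, $sX=X^{-1}s$ and $XY=YX$. Lemma \ref{sY} gives $sY=-Ys$. Let $A$ be the abstract algebra on these generators and relations. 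The relations let every word be rewritten in the form $X^mY^ns^\varepsilon$, so $A$ is spanned by such monomials. The images of these monomials in $\C[\Gamma,\mu]$ are $\pm N_{(m,\varepsilon,n)}$, which are linearly independent. Hence the surjection $A\to\C[\Gamma,\mu]$ is an isomorphism.

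\emph{Simple modules.} The algebra is a free module of rank $4$ over its commutative subalgebra $R=\C[X^{\pm1},Y^{\pm1}]$, so it is finite over its centre. The centre contains $\C[X+X^{-1},Y^2]$, and the algebra is finite over that ring, so every simple module is finite-dimensional by Schur's lemma/Kaplansky. Let $V$ be simple. The commuting operators $X,Y$ have a common eigenvector $v$, with $Xv=zv$ and $Yv=wv$. Then $sv$ satisfies $Y(sv)=-wsv$ and $X(sv)=z^{-1}sv$. The span of $v$ and $sv$ is stable under $s,X,Y$, so it equals $V$. Since $w\neq -w$ (as $w\neq0$), the vectors $v$ and $sv$ are eigenvectors of $Y$ with distinct eigenvalues, hence independent. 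This gives $\dim V=2$ with exactly the stated matrices, and in particular there are no $1$-dimensional modules. Conversely, these matrices satisfy the relations, and $M_{w,z}$ is simple: any invariant line is $Y$-stable, so it is spanned by $e_1$ or $e_2$, and $s$ swaps them. Choosing the eigenvector $sv$ instead of $v$ shows $M_{w,z}\cong M_{-w,z^{-1}}$. Conversely, an isomorphism must match the $Y$-spectra $\{w,-w\}$ together with the attached $X$-eigenvalues. Therefore $M_{w,z}\cong M_{w',z'}$ if and only if $(w',z')\in\{(w,z),(-w,z^{-1})\}$.

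\emph{Primitive spectrum.} Every primitive ideal is the annihilator of a simple module. This holds because the algebra is a finitely generated PI algebra, hence satisfies the Nullstellensatz and all its simple modules are finite-dimensional. The primitive spectrum therefore equals the set of isomorphism classes, namely $(\C^\times)^2/\langle\tau\rangle$ with $\tau(w,z)=(-w,z^{-1})$. Since $\tau$ acts freely, this agrees with Clifford theory for the free action of $\Z/2$ on $\Spec R$. The quotient of the irreducible variety $(\C^\times)^2$ by a finite group is irreducible. The main obstacle is this identification with the correct topology/variety structure, as opposed to a mere set bijection. I would handle it by showing the centre is exactly $R^{\langle\tau\rangle}$ and that the algebra is Azumaya of rank $4$ over it, because $\tau$ acts freely. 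Then the primitive spectrum is homeomorphic to $\Spec$ of the centre, which is $(\C^\times)^2/\tau$.
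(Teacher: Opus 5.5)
Your proposal is correct and follows the paper's route: Clifford theory over $B=\C[X^{\pm1},Y^{\pm1}]$ with the free involution $(w,z)\mapsto(-w,z^{-1})$, so that a joint eigenvector $v$ together with $sv$ spans each simple module. It also makes explicit what the paper leaves implicit: the basis check for the presentation, finite-dimensionality of simples (which the paper's exhaustion step tacitly uses when it picks a character of $B$ occurring in $M$), the converse of the isomorphism criterion, and the Azumaya argument for the topology on the primitive spectrum. Two slips to fix: the algebra is free of rank $2$ over $R$ (basis $1,s$) and of rank $4$ over its centre $R^{\langle\tau\rangle}$; and it is not finite over $\C[X+X^{-1},Y^2]$, since $Y^{-1}$ is not integral over that ring, so use the central subalgebra $\C[X+X^{-1},Y^{\pm2}]$ instead.
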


\begin{proof}
\emph{Presentation of the twisted group algebra.}
Write
\[
\Gamma=(\Z_A\rtimes \Z/2)\times \Z_B,
\]
where the generator $s$ of $\Z/2$ acts on $\Z_A$ by inversion.
Let $X$ denote the group-like element corresponding to $1\in \Z_A$
and $Y$ the group-like element corresponding to $1\in \Z_B$.
Then in the untwisted group algebra one has
\[
sXs^{-1}=X^{-1},\qquad XY=YX,\qquad s^2=1.
\]
The cocycle $\mu$ is trivial on $\Z_A\rtimes \Z/2$ and on $\Z_B$,
and is nontrivial only on the subgroup $\Z/2\times \Z_B$; explicitly,
with additive notation $(\varepsilon,m)\in \Z/2\times\Z_B$,
\[
\mu\bigl((\varepsilon,m),(\delta,n)\bigr)=(-1)^{\varepsilon n}.
\]
In particular, if we write $N_g$ for the basis element attached to $g\in\Gamma$,
then in $\C[\Gamma,\mu]$ we have
\[
N_sN_Y=\mu(s,Y)N_{sY}=(-1)N_{sY},\qquad
N_YN_s=\mu(Y,s)N_{Ys}=(+1)N_{Ys}.
\]
Since $Ys=sY$ in the underlying group, this yields the twisted relation
\[
N_sN_Y=-\,N_YN_s.
\]
Thus $\C[\Gamma,\mu]$ is generated by $s,X^{\pm1},Y^{\pm1}$ with relations
\[
s^2=1,\qquad sX=X^{-1}s,\qquad sY=-Ys,\qquad XY=YX,
\]
which gives the stated presentation.

\medskip
\emph{No one-dimensional modules.}
Let $M$ be a $1$--dimensional module. Since $Y$ is invertible in the algebra,
its image in $\mathrm{End}(M)\cong \C$ must be a nonzero scalar, say $Y\mapsto \lambda\in\C^\times$.
But the relation $sY=-Ys$ forces
\[
s\lambda = -\lambda s.
\]
In $\C$ this implies $2\lambda s=0$, hence $s=0$, contradicting $s^2=1$.
Therefore \emph{no} $1$--dimensional representations exist.

\medskip
\emph{Reduction to Clifford theory over a commutative subalgebra.}
Let
\[
B:=\C[X^{\pm1},Y^{\pm1}]\subset \C[\Gamma,\mu].
\]
Then $B$ is a commutative Laurent polynomial algebra in two variables.
Conjugation by $s$ defines an involutive automorphism $\alpha$ of $B$:
\[
\alpha(X)=X^{-1},\qquad \alpha(Y)=-Y.
\]
Equivalently, for any character $\chi:B\to\C$ determined by
\[
\chi(X)=z\in\C^\times,\qquad \chi(Y)=w\in\C^\times,
\]
one has
\[
(\chi\circ\alpha)(X)=z^{-1},\qquad (\chi\circ\alpha)(Y)=-w.
\]
Thus the $\Z/2$--action on $\Spec(B)\cong(\C^\times)^2$ is
\[
(w,z)\longmapsto(-w,z^{-1}).
\]
Note that this action has \emph{no fixed points} on $(\C^\times)^2$:
a fixed point would require $z=z^{-1}$ and $w=-w$, hence $w=0$, impossible.

\medskip
\emph{Construction of the $2$--dimensional simple modules.}
Fix $(w,z)\in(\C^\times)^2$ and let $\C_{w,z}$ be the $1$--dimensional $B$--module
with $X\mapsto z$ and $Y\mapsto w$.
Form the induced module
\[
M_{w,z}:=\Ind_{B}^{\C[\Gamma,\mu]}\C_{w,z}\;\cong\;\C[\Gamma,\mu]\otimes_B \C_{w,z}.
\]
Because the $\Z/2$--orbit of $(w,z)$ has size $2$, standard Clifford theory
(or a direct computation below) shows $M_{w,z}$ is $2$--dimensional and simple.

Concretely, let $v:=1\otimes 1\in M_{w,z}$ and set $v':=s\cdot v$.
Then $\{v,v'\}$ is a basis of $M_{w,z}$.
Using $s^2=1$ we have $s\cdot v=v'$ and $s\cdot v'=v$, so
\[
s\mapsto \begin{pmatrix}0&1\\[2pt]1&0\end{pmatrix}.
\]
Next, $Xv=zv$ by definition.
Also,
\[
Xv' = X(sv) = (Xs)v = (sX^{-1})v = s(z^{-1}v)=z^{-1}v',
\]
so
\[
X\mapsto \begin{pmatrix} z&0\\[2pt]0&z^{-1}\end{pmatrix}.
\]
Similarly, $Yv=wv$ and
\[
Yv' = Y(sv) = -(sY)v = -s(wv)= -wv',
\]
hence
\[
Y\mapsto \begin{pmatrix} w&0\\[2pt]0&-w\end{pmatrix}.
\]
These matrices satisfy the defining relations, so $M_{w,z}$ is indeed a representation.

\medskip
\emph{Isomorphism relation $M_{w,z}\cong M_{-w,z^{-1}}$.}
The $\Z/2$--action on characters of $B$ sends $(w,z)$ to $(-w,z^{-1})$.
Equivalently, conjugating the above matrices by the $s$--matrix swaps the two basis vectors
and sends $(w,z)$ to $(-w,z^{-1})$.
Thus $M_{w,z}\cong M_{-w,z^{-1}}$.

\medskip
\emph{Exhaustion: every simple module is one of these.}
Let $M$ be a simple $\C[\Gamma,\mu]$--module.
Restrict $M$ to the commutative subalgebra $B$.
Choose a maximal ideal $\frak m\subset B$ in the support of $M$; equivalently,
choose a character $\chi:B\to\C$ occurring in $M$, with $\chi(X)=z\neq 0$ and $\chi(Y)=w\neq 0$
(since $X,Y$ are invertible in the algebra).
Because the $\Z/2$--action on $\Spec(B)$ is free, the $\chi$--isotypic component generates a
$\C[\Gamma,\mu]$--submodule of dimension $2$ (spanned by $v$ and $sv$ as above), and simplicity
forces $M$ to be isomorphic to this induced module $M_{w,z}$.
Therefore every simple module is $2$--dimensional and isomorphic to some $M_{w,z}$, with the
sole identification $(w,z)\sim(-w,z^{-1})$.

This completes the proof.
\end{proof}

\begin{corollary}[Klein bottle as the compact parameter space]\label{cor:Klein-compact}
Let
\[
\tau:\; S^1\times S^1 \longrightarrow S^1\times S^1,\qquad 
\tau(w,z)=(-w,z^{-1}).
\]
Then the parameter space of simple \emph{unitary} modules of the twisted algebra
$\C[\Gamma,\mu^\cT]$ (i.e.\ those with $|w|=|z|=1$) is the quotient
\[
K := (S^1\times S^1)/\langle\tau\rangle,
\]
and $K$ is homeomorphic to a Klein bottle.
\end{corollary}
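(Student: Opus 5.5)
Two things need proof: the unitary simple modules are exactly the $M_{w,z}$ with $(w,z)\in S^1\times S^1$, and the quotient $(S^1\times S^1)/\langle\tau\rangle$ is a Klein bottle.

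\textbf{Step 1: the parameter space.} By Theorem~\ref{thm:simples-expanded}, every simple module is some $M_{w,z}$, with $M_{w,z}\cong M_{-w,z^{-1}}$ as the only identification. Here unitary means unitary for the $*$-structure $N_g^*=\overline{\mu(g,g^{-1})}N_{g^{-1}}$, so $X^*=X^{-1}$, $Y^*=Y^{-1}$, $s^*=s$. A $2$-dimensional $M_{w,z}$ is then unitarizable exactly when $X$ and $Y$ act by unitaries, i.e.\ $|z|=|w|=1$. For the converse, the standard basis $\{v,sv\}$ makes $s,X,Y$ unitary whenever $|w|=|z|=1$. Moreover $\tau$ preserves $S^1\times S^1$. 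Hence the unitary parameter space is $K=(S^1\times S^1)/\langle\tau\rangle$.

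\textbf{Step 2: freeness.} Write $w=e^{i\theta}$ and $z=e^{i\phi}$. Then $\tau(\theta,\phi)=(\theta+\pi,-\phi)$. A fixed point would need $\theta\equiv\theta+\pi \pmod{2\pi}$, which is impossible. So $\tau$ is a free involution of the torus, and $K$ is a closed surface with $\chi(K)=\chi(T^2)/2=0$.

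\textbf{Step 3: non-orientability.} The derivative of $\tau$ is $\mathrm{diag}(1,-1)$, which has determinant $-1$. A free orientation-reversing involution yields a non-orientable quotient. A connected closed non-orientable surface with $\chi=0$ is the Klein bottle.

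\textbf{Alternative explicit model.} One can also identify $K$ directly. Take the fundamental domain $\theta\in[0,\pi]$, $\phi\in[0,2\pi]$. The edge $\theta=0$ is glued to $\theta=\pi$ by $\phi\mapsto-\phi$, and $\phi=0$ is glued to $\phi=2\pi$ directly. This is exactly the standard square identification for the Klein bottle.

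The main subtlety is the definition of ``unitary'' in Step 1: the twisted $*$-structure must make $X$ and $Y$ unitary. Since $\mu(Y,Y^{-1})=1$, this holds. This step is short, but it is the only place where the cocycle enters the argument.
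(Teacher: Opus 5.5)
Your proposal is correct and follows the paper's proof essentially verbatim. Both arguments show that $\tau(\theta,\phi)=(\theta+\pi,-\phi)$ is free, get $\chi(K)=\chi(T^2)/2=0$ from the double cover, note that $\tau$ reverses orientation, and conclude by the classification of closed surfaces. Your two additions are also correct: the justification of the unitary locus via the $*$-structure $X^*=X^{-1}$, $Y^*=Y^{-1}$, $s^*=s$ (the paper simply takes $|w|=|z|=1$ as the definition), and the explicit square model.
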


\begin{proof}
By Theorem~\ref{thm:simples-expanded}, simple modules are parametrized by
$(w,z)\in(\C^\times)^2$ modulo the identification $(w,z)\sim(-w,z^{-1})$.
Restricting to the unitary locus $|w|=|z|=1$ gives precisely the quotient
$K=(S^1\times S^1)/\langle\tau\rangle$.

Write $w=e^{i\theta}$ and $z=e^{i\phi}$. Then $\tau$ acts by
\[
(\theta,\phi)\longmapsto(\theta+\pi,\,-\phi).
\]
This action is free: a fixed point would require $\theta\equiv \theta+\pi \pmod{2\pi}$,
impossible. Hence $S^1\times S^1\to K$ is a $2$--fold covering and
\[
\chi(K)=\chi(S^1\times S^1)/2 = 0.
\]
Moreover $\tau$ is orientation-reversing (it flips the $\phi$--circle), so $K$ is a closed
non-orientable surface with Euler characteristic $0$. Therefore $K$ is a Klein bottle.
\end{proof}

\section{Simple modules of the untwisted algebra $\C[\Gamma]$}

\begin{proposition}\label{prop:simples-untwisted}
Let
\[
\Gamma := (\Z_A \rtimes \Z/2)\times \Z_B,
\]
where the generator $s$ of $\Z/2$ acts on $\Z_A$ by inversion.
Let $X$ (resp. $Y$) denote the group-like element corresponding to $1\in \Z_A$
(resp.\ $1\in \Z_B$). Then the (untwisted) group algebra admits the presentation
\[
\C[\Gamma]\;\cong\;
\C[s,X,X^{-1},Y,Y^{-1}]\Big/\bigl(s^2-1,\; sX-X^{-1}s,\; sY-Ys,\; XY-YX\bigr),
\]
i.e.\ $B:=\C[X^{\pm1},Y^{\pm1}]$ is commutative and $s$ acts on $B$ via
\[
\alpha(X)=X^{-1},\qquad \alpha(Y)=Y.
\]
The simple $\C[\Gamma]$--modules are as follows.

\smallskip\noindent
%\emph{Two-dimensional simple modules (generic orbit).}
For $(w,z)\in (\C^\times)^2$ with $z\neq \pm 1$, let $M_{w,z}\cong \C^2$ with
\[
Y\mapsto\begin{pmatrix} w&0\\0&w\end{pmatrix},\qquad
s\mapsto\begin{pmatrix}0&1\\1&0\end{pmatrix},\qquad
X\mapsto\begin{pmatrix} z&0\\0&z^{-1}\end{pmatrix}.
\]
Then $M_{w,z}$ is simple, and
\[
M_{w,z}\;\cong\; M_{w,z^{-1}}.
\]
Every simple module of dimension $2$ is isomorphic to exactly one $M_{w,z}$ with $z\neq \pm1$,
up to the identification $z\sim z^{-1}$.

\smallskip\noindent
%\textbf{(II) One-dimensional simple modules (fixed points $z=\pm 1$).}
For each $w\in \C^\times$, each choice of sign $\delta\in\{+1,-1\}$, and each
$\varepsilon\in\{+1,-1\}$, let $\C_{w,\delta,\varepsilon}$ be the $1$--dimensional module with
\[
Y\mapsto w,\qquad X\mapsto \delta,\qquad s\mapsto \varepsilon.
\]
These are simple, pairwise non-isomorphic, and \emph{exhaust} all $1$--dimensional simple modules.

\smallskip
Consequently, the set of isomorphism classes of simple modules of $\C[\Gamma]$ is the disjoint union
\[
\Bigl\{\,M_{w,z}:\ (w,z)\in(\C^\times)^2,\ z\neq\pm 1\,\Bigr\}\Big/\bigl((w,z)\sim(w,z^{-1})\bigr)
\;\;\sqcup\;\;
\Bigl\{\,\C_{w,\delta,\varepsilon}:\ w\in\C^\times,\ \delta=\pm1,\ \varepsilon=\pm1\,\Bigr\}.
\]
\end{proposition}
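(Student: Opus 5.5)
The plan is to run the same Clifford-theoretic argument as in the proof of Theorem~\ref{thm:simples-expanded}, now with the untwisted involution. Everything rests on the commutative subalgebra $B=\C[X^{\pm1},Y^{\pm1}]$ and the automorphism $\alpha$ of $B$ induced by conjugation by $s$.

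\textbf{Presentation and action.} In $\C[\Gamma]$ the cocycle is trivial, so $N_sN_Y=N_{sY}=N_{Ys}=N_YN_s$. The other relations $s^2=1$, $sX=X^{-1}s$ and $XY=YX$ come directly from the group structure of $\Gamma=(\Z_A\rtimes\Z/2)\times\Z_B$. Every element of $\Gamma$ has a unique normal form $X^mY^ns^\varepsilon$, so the displayed quotient has basis $\{X^mY^ns^\varepsilon\}$. This matches the group basis, which gives the presentation. The action on $\Spec B=(\C^\times)^2$ is then $(w,z)\mapsto(w,z^{-1})$, and its fixed points are exactly the two lines $z=\pm1$.

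\textbf{Generic orbits.} For $z\neq\pm1$, I will check directly that the stated matrices satisfy the four relations. For simplicity, note that $X$ has the distinct eigenvalues $z\neq z^{-1}$. Any nonzero invariant subspace is therefore spanned by eigenvectors of $X$, i.e.\ by $e_1$ or $e_2$. But $s$ swaps $e_1$ and $e_2$, so the subspace is all of $\C^2$. Conjugating by the $s$-matrix swaps the basis vectors and gives $M_{w,z}\cong M_{w,z^{-1}}$. Conversely, the spectrum of $(X,Y)$ on $M_{w,z}$ is the multiset $\{(w,z),(w,z^{-1})\}$, which is an isomorphism invariant. So no other identifications occur.

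\textbf{One-dimensional modules.} A $1$-dimensional module sends $X,Y,s$ to commuting scalars. The relation $sX=X^{-1}s$ with $s$ invertible gives $X^2=1$, so $X=\delta=\pm1$, and $s^2=1$ gives $s=\varepsilon=\pm1$. The value $Y=w\in\C^\times$ is free. Distinct triples $(w,\delta,\varepsilon)$ are distinct characters, hence non-isomorphic.

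\textbf{Exhaustion.} This is the step I expect to need the most care, because at the fixed points $z=\pm1$ the induced module is no longer simple. Let $M$ be a simple module. It is a module over a finitely generated algebra, finite over its centre $B^{\alpha}[s]$-ish $=\C[X+X^{-1},Y^{\pm1}]$, which is central. So $M$ is finite-dimensional by Schur/Kaplansky (or by the generic-flatness argument used implicitly before). Choose a joint eigenvector $v$ of $B$ with character $(w,z)$. Since $sX=X^{-1}s$ and $sY=Ys$, the vector $sv$ is a joint eigenvector with character $(w,z^{-1})$. Hence $\mathrm{span}\{v,sv\}$ is a submodule, and by simplicity it equals $M$.

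If $z\neq\pm1$, then $v$ and $sv$ are independent, and sending $v\mapsto e_1$, $sv\mapsto e_2$ gives $M\cong M_{w,z}$. If $z=\pm1$, then $B$ acts by the scalars $(w,z)$ on all of $M$. Since $s$ is diagonalizable with eigenvalues $\pm1$, any eigenvector of $s$ spans a submodule. Simplicity then forces $\dim M=1$, giving $M\cong\C_{w,z,\varepsilon}$. Combining the cases gives the stated disjoint union.
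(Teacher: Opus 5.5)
Your proposal is correct and takes the same route as the paper. Both view $\C[\Gamma]$ as $B\rtimes_\alpha\Z/2$, study the involution $(w,z)\mapsto(w,z^{-1})$ on characters of $B$, and separate the free orbits $z\neq\pm1$ from the fixed lines $z=\pm1$. Where the paper cites ``standard Clifford theory'' and notes that the induced module splits at $z=\pm1$, you argue directly, and your exhaustion step supplies what the paper leaves implicit. That step has two parts: finite-dimensionality of simple modules, via finiteness over the central subalgebra $\C[X+X^{-1},Y^{\pm1}]$, and the observation that $\mathrm{span}\{v,sv\}$ is a submodule. One correction: drop the aside ``$B^{\alpha}[s]$-ish'', since adjoining $s$ gives nothing central.
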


\begin{proof}
\emph{Presentation.}
In the group $\Gamma$ we have $s^2=1$, $sXs^{-1}=X^{-1}$ (inversion on $\Z_A$),
and $sYs^{-1}=Y$ (since $\Z_B$ is a direct factor). Also $XY=YX$ because $\Z_A$ and $\Z_B$
commute. This gives the stated presentation.

\medskip
\emph{View as a crossed product.}
Let $B=\C[X^{\pm1},Y^{\pm1}]$. Conjugation by $s$ induces an involution $\alpha$ of $B$:
$\alpha(X)=X^{-1}$ and $\alpha(Y)=Y$. Thus $\C[\Gamma]\cong B\rtimes_\alpha \Z/2$.

Characters of $B$ are parametrized by $(w,z)\in(\C^\times)^2$ via
\[
\chi_{w,z}(Y)=w,\qquad \chi_{w,z}(X)=z.
\]
The induced action on $\Spec(B)\cong(\C^\times)^2$ is
\[
(w,z)\longmapsto (w,z^{-1}).
\]
Hence the orbit of $(w,z)$ has size $2$ if and only if $z\neq \pm 1$, and has size $1$ exactly
when $z=\pm1$.

\medskip
\emph{Generic case $z\neq \pm1$: induced modules are $2$--dimensional and simple.}
Fix $z\neq \pm1$ and consider the $1$--dimensional $B$--module $\C_{w,z}$ with character $\chi_{w,z}$.
Form the induced module
\[
M_{w,z}:=\Ind_{B}^{B\rtimes\Z/2}\C_{w,z}\;\cong\; (B\rtimes\Z/2)\otimes_B \C_{w,z}.
\]
Let $v:=1\otimes 1$ and $v':=s\cdot v$. Then $\{v,v'\}$ spans $M_{w,z}$.
One computes
\[
Yv=wv,\quad Yv'=w v',\qquad
Xv=zv,\quad Xv'=X(sv)=(sX^{-1})v=z^{-1}v',
\]
and $s$ swaps $v$ and $v'$, giving exactly the displayed matrices.
Since the $\Z/2$--orbit of $\chi_{w,z}$ has size $2$, standard Clifford theory for crossed products
implies $M_{w,z}$ is simple; moreover $M_{w,z}\cong M_{w,z^{-1}}$ because $(w,z)$ and $(w,z^{-1})$
lie in the same orbit.

\medskip
\emph{Fixed-point case $z=\pm1$: one-dimensional simple modules.}
Assume $z=\delta\in\{\pm1\}$ so $\chi_{w,\delta}$ is fixed by $\alpha$.
Then the relation $sX=X^{-1}s$ imposes no constraint beyond $s\delta=\delta s$, and $s$ commutes
with $Y$ in the untwisted algebra. Hence we may choose any scalar $\varepsilon=\pm1$ for $s$
(subject only to $s^2=1$), obtaining the $1$--dimensional module $\C_{w,\delta,\varepsilon}$.
These are clearly pairwise non-isomorphic by their eigenvalues for $(X,Y,s)$.

Finally, if $z=\pm1$ then the induced module $\Ind_B^{B\rtimes\Z/2}\C_{w,z}$ is reducible and splits
as the direct sum of the two characters corresponding to $\varepsilon=\pm1$.
Thus there are no further simple modules.
\end{proof}

\begin{remark}[Effect of the cocycle on simple modules and topology]\label{rem:twist-comparison}
The passage from the untwisted group algebra $\C[\Gamma]$ to the twisted algebra
$\C[\Gamma,\mu^T]$ has two decisive consequences.

First, the cocycle modifies the crossed--product relation between the $\Z/2$--generator $s$
and the $\Z_B$--generator $Y$ from $sY=Ys$ to $sY=-Ys$.  
As a result, $1$--dimensional representations disappear: in dimension $1$ the relation
$sY=-Ys$ would force $Y=0$, contradicting invertibility.  
Thus every simple module of the twisted algebra becomes $2$--dimensional
(Theorem~\ref{thm:simples-expanded}).

Second, this altered commutation relation changes the induced action of $\Z/2$
on the character torus $\Spec \C[X^{\pm1},Y^{\pm1}] \cong (\C^\times)^2$.
In the untwisted case the involution is
\[
(w,z)\longmapsto (w, z^{-1}),
\]
whose fixed points $z=\pm1$ give rise precisely to the $1$--dimensional simple modules.
After twisting, the involution becomes
\[
(w,z)\longmapsto (-w, z^{-1}),
\]
which has no fixed points on $(\C^\times)^2$ and therefore yields only
$2$--dimensional simple modules.

On the compact real form $S^1\times S^1$, this change replaces the quotient
of the torus by a reflection with boundary (compatible with the presence of
$1$--dimensional representations) by a free, orientation--reversing involution.
The resulting compact quotient is non--orientable and closed, hence a Klein bottle.
In this precise sense, the Klein bottle geometry is the topological manifestation
of the nontrivial cocycle $\mu^\cT$.
\end{remark}

\section{The Klein bottle as geometric footprint of the algebra $\C[\Gamma, \mu^\cT]$}   

Consider the involution
\[
\tau\colon\; S^1\times S^1 \longrightarrow S^1\times S^1,\qquad \tau(w,z)=(-w,z^{-1}),
\]
and the quotient space
\[
\cK := (S^1\times S^1)/\langle \tau\rangle .
\]
Write $w=e^{i\theta}$ and $z=e^{i\phi}$.  Then $\tau$ acts by
\[
(\theta,\phi)\longmapsto (\theta+\pi,\,-\phi).
\]
This action is free (no fixed points), hence $\cK$ is a closed smooth surface and
$S^1\times S^1\to \cK$ is a $2$--fold covering.

\subsubsection{How it sits inside the complex quotient as a real form}
Let
\[
X := (\C^\times\times \C^\times)/\langle \tau\rangle,\qquad \tau(w,z)=(-w,z^{-1}).
\]
The compact torus $S^1\times S^1\subset \C^\times\times\C^\times$ is $\tau$--stable, hence its
quotient $\cK$ embeds naturally as a real submanifold of $X$:
\[
K \;=\; (S^1\times S^1)/\langle\tau\rangle \;\hookrightarrow\;
(\C^\times\times\C^\times)/\langle\tau\rangle \;=\; X.
\]
Moreover $\cK$ is a \emph{maximal compact} real form of $X$ in the sense that
$\C^\times\times\C^\times$ deformation-retracts onto $S^1\times S^1$, and this retract is
$\tau$--equivariant; hence $X$ deformation-retracts onto $\cK$.  In particular, $\cK$ captures the
homotopy type of the complex surface $X$.

Restricting to the maximal compact subgroup $S^1\times S^1$ produces the compact real form of
the quotient.  Because the involution is free but orientation-reversing on the torus,
the resulting compact quotient is non-orientable, namely the Klein bottle.  In this sense the
Klein bottle is a topological footprint of the nontrivial cocycle $\mu^\cT$.

\begin{lemma}[Square--root coordinates for the Bernstein torus]\label{lem:bc-wz}
Let
\[
J=\{\pm 1\}, \qquad (-1)\cdot(b,c)=(-b,-c)
\]
act on $(\C^\times)^2$, and define
\[
w := bc, \qquad z := \frac{b}{c}.
\]
Then the map
\[
(\C^\times)^2 \longrightarrow (\C^\times)^2,
\qquad (b,c)\longmapsto (w,z)
\]
is invariant under $J$ and induces an algebraic isomorphism
\[
(\C^\times)^2/J \;\xrightarrow{\;\sim\;}\; (\C^\times)^2.
\]
\end{lemma}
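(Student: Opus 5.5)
Since $(-b)(-c)=bc$ and $(-b)/(-c)=b/c$, the morphism $\Phi\colon(b,c)\mapsto(bc,\,b/c)$ is constant on $J$-orbits. It therefore factors through the quotient $(\C^\times)^2/J=\Spec\,\C[b^{\pm1},c^{\pm1}]^J$. The plan is to work at the level of coordinate rings and show that $\Phi^*$ identifies $\C[w^{\pm1},z^{\pm1}]$ with the $J$-invariant subring $\C[b^{\pm1},c^{\pm1}]^J$. An isomorphism of coordinate rings gives the required algebraic isomorphism of affine varieties.

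First I would identify the invariant ring. The group $J$ acts on a monomial $b^mc^n$ by the sign $(-1)^{m+n}$. Hence $\C[b^{\pm1},c^{\pm1}]^J$ is spanned by the monomials with $m+n$ even, and it is the group algebra of the index-two sublattice
\[
L=\{(m,n)\in\Z^2 : m+n\equiv 0 \pmod 2\}.
\]

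Next I would check that $L$ is freely generated by $(1,1)$ and $(1,-1)$, the exponent vectors of $w=bc$ and $z=b/c$. These vectors lie in $L$, and the matrix $\left(\begin{smallmatrix}1&1\\1&-1\end{smallmatrix}\right)$ they form has determinant $-2$, which equals the index $[\Z^2:L]$, so they form a $\Z$-basis of $L$. Concretely, for $(m,n)\in L$ one has
\[
b^mc^n=w^{(m+n)/2}z^{(m-n)/2},
\]
with both exponents integers. Thus $\Phi^*\colon\C[w^{\pm1},z^{\pm1}]\to\C[b^{\pm1},c^{\pm1}]^J$ is surjective. It is injective because distinct monomials $w^pz^r$ go to distinct monomials $b^{p+r}c^{p-r}$. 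So $\Phi^*$ is an isomorphism, which proves the lemma.

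As a pointwise sanity check, each fibre of $\Phi$ is a single $J$-orbit: given $(w,z)$, one has $b^2=wz$, so $b$ is determined up to sign and then $c=w/b$. This matches the algebraic statement. The only step needing any care is the lattice-index computation; everything else is routine.
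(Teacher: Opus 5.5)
Your proof is correct, but it takes a different route from the paper.

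\emph{The paper's route.} The paper argues pointwise. It checks $J$-invariance, then shows the induced map $(\C^\times)^2/J\to(\C^\times)^2$ is bijective by solving $b^2=wz$, $c^2=w/z$. It then asserts that a bijective morphism between smooth two-dimensional tori is an algebraic isomorphism. That last step silently relies on something like Zariski's Main Theorem: a bijective morphism is birational in characteristic zero, and the target is normal.

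\emph{Your route.} You instead compute the invariant ring $\C[b^{\pm1},c^{\pm1}]^J$ as the group algebra of the index-two lattice $L$. Using the explicit basis $(1,1),(1,-1)$ of $L$, you show that $\Phi^*$ carries $\C[w^{\pm1},z^{\pm1}]$ isomorphically onto it.

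\emph{What each buys.} Your argument is self-contained and establishes the algebraic (not merely set-theoretic) isomorphism directly, with an explicit inverse on coordinate rings and no appeal to normality or Zariski's Main Theorem. Your pointwise check is also cleaner. Writing $c=w/b$ avoids the two extra solutions that the paper's system $b^2=wz$, $c^2=w/z$ admits, namely those with $bc=-w$; the paper's phrase ``the only ambiguity is the simultaneous sign'' glosses over these. The paper's approach is shorter and more geometric, while yours makes the word ``algebraic'' in the statement fully explicit.
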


\begin{proof}
If $(b,c)\mapsto(-b,-c)$ then
\[
bc \longmapsto (-b)(-c)=bc,\qquad 
\frac{b}{c} \longmapsto \frac{-b}{-c}=\frac{b}{c},
\]
so the map is $J$--invariant and hence descends to the quotient.

Conversely, given $(w,z)\in(\C^\times)^2$, any solution $(b,c)$ of
\[
b^2 = wz, \qquad c^2 = \frac{w}{z}
\]
determines a preimage, and the only ambiguity is the simultaneous sign
$(b,c)\sim(-b,-c)$, which is precisely the $J$--action.
Hence the induced map $(\C^\times)^2/J \to (\C^\times)^2$ is bijective.
Since both varieties are smooth algebraic tori of dimension $2$, the map
is an algebraic isomorphism.
\end{proof}

\subsubsection{The tempered dual of $G^0$}   We recall the definitions of $w$ and $z$:
\[
w: = bc, \qquad \quad z: = \frac{b}{c}.
\]
It follows that $w$ and $z$ have modulus $1$ if $b$ and $c$ have modulus $1$.   

Let $\fB$ denote the reduced $C^\ast$-algebra of $G^0$.   We have the Bernstein decomposition
\[
\fB  = \bigoplus_{\ft\in\fB G^0} \, \fB^{\ft}
\]
as a $C^\ast$-direct sum.   Define
\[
\fA^0: = \fB^{\fs^0}
\]
and let $X_{\unr}(M^0)$ denote the group of unramified unitary characters of $M^0$.

 Let $\widehat{\fA^0}$ denote the spectrum of $\fA^0$, equipped with its standard Jacobson topology on the set of its primitive ideals, see \cite[\S3.1]{D}.  
 
 \begin{theorem}\label{spectrum} The spectrum of $\fA^0$ is homeomorphic to the Klein bottle:
 \[
 \widehat{\fA^0} \cong \mathbf{Kb}.
 \]
 \end{theorem}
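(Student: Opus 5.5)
The plan is to use the Harish-Chandra Plancherel theorem, in the form of the Plymen / Solleveld description of a Bernstein summand of the reduced $C^\ast$-algebra. This identifies $\fA^0$ with an algebra of $W^{\fs^0}$-invariant sections over the compact torus of unitary unramified twists. The Klein bottle should then come out as the quotient of that compact torus, already computed in Theorem~\ref{prop:Klein} and Corollary~\ref{cor:Klein-compact}.

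\textbf{Step 1 (tempered dual of the block).} Every irreducible tempered representation in $\mathrm{Rep}^{\fs^0}(G^0(F))$ is a constituent of $\Ind_{P^0}^{G^0}(\psi\otimes\sigma^0)$ for some $\psi\in X_{\unr}(M^0)$, or is a discrete series of $G^0$. I would exclude the discrete series as follows. The type $(K^0,\rho^0)$ identifies the block with modules over $\cH^0\simeq\C[\Gamma,\mu^\cT]$. By Theorem~\ref{thm:simples-expanded}, every simple module is the $2$-dimensional module $M_{w,z}$, induced from a character of $B$. Under the equivalence this module is the full induced representation $\Ind_{P^0}^{G^0}(\psi\otimes\sigma^0)$, which is irreducible. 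A discrete series would have to be a proper subquotient of some $\Ind_{P^0}^{G^0}(\psi\otimes\sigma^0)$, so none occurs. By the same argument, every unitary $\Ind_{P^0}^{G^0}(\psi\otimes\sigma^0)$ with $\psi$ unitary is irreducible, i.e.\ the Knapp--Stein $R$-group is trivial. This matches the freeness of the $W^{\fs^0}$-action noted in Theorem~\ref{prop:Klein}.

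\textbf{Step 2 (structure of $\fA^0$).} By the Plancherel theorem, $\fA^0$ is isomorphic to
\[
\bigl(C(X_{\unr}(M^0)\cdot\sigma^0,\ \cK(\mathcal V))\bigr)^{W^{\fs^0}},
\]
where $\mathcal V$ is the common space of the induced representations (compact picture). The group $W^{\fs^0}=\{1,\widetilde s\}$ acts on the base by the involution of Theorem~\ref{prop:Klein}, and on the fibres through normalized intertwining operators. Since the action on the compact torus $T_{\mathrm{cpt}}\cong S^1\times S^1$ is free, a standard result on crossed-product-type fixed-point algebras for free actions applies. It shows that this invariant algebra is the algebra of sections of a locally trivial bundle of elementary $C^\ast$-algebras over $T_{\mathrm{cpt}}/W^{\fs^0}$. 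In particular it is Morita equivalent to $C(T_{\mathrm{cpt}}/W^{\fs^0})$, up to a Dixmier--Douady class that does not affect the spectrum.

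\textbf{Step 3 (conclusion).} The spectrum of such a continuous-trace algebra is homeomorphic, in the Jacobson topology, to its base $T_{\mathrm{cpt}}/W^{\fs^0}$. To identify this space, apply Lemma~\ref{lem:bc-wz} restricted to moduli $1$. Under this change of variables the maximal compact form of $(\C^\times)^2/J$ becomes $S^1\times S^1$, and the involution becomes $\tau(w,z)=(-w,z^{-1})$. Corollary~\ref{cor:Klein-compact} then gives $\widehat{\fA^0}\cong\mathbf{Kb}$.

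\textbf{Main obstacle.} I expect Step 2 to be the hardest, specifically checking that the fibre action of $\widetilde s$ is implemented continuously by intertwining operators. These operators involve the cocycle $\mu^\cT$, so they give only a projective action. I would handle this by noting that freeness on the base lets one work locally on $W$-stable neighbourhoods: there the invariant algebra is locally $C(U,\cK(\mathcal V))$, whatever the cocycle. This local triviality suffices for the spectrum to be Hausdorff and equal to the orbit space.
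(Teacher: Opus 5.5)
Your proposal is correct. It shares the paper's backbone: irreducibility of the unitary principal series of the block, followed by the $C^\ast$-Plancherel theorem over a free $W^{\fs^0}$-orbit space. You obtain both inputs by a different route, however.

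\textbf{What the paper does.} It gets irreducibility of $\Ind_{P^0}^{G^0}(\psi\otimes\sigma^0)$ by citing the Proposition in \cite[\S4.3]{R} (maximal Levi, free action). It asserts without further argument that $X_{\unr}(M^0)\to\widehat{\fA^0}$ is surjective. It then quotes \cite[Cor.~2.7]{P} to get $\fA^0\cong C(\mathbf{Kb},\fK)$ directly.

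\textbf{Irreducibility in your version.} You derive it internally, from Theorem~\ref{thm:simples-expanded} and the type equivalence. The $\rho^0$-isotypic part of $\Ind_{P^0}^{G^0}(\psi\otimes\sigma^0)$ is a $2$-dimensional $\cH^0$-module. This follows from the Bushnell--Kutzko compatibility of parabolic induction with induction along $t_P$, or from a direct Mackey count over $\{1,\widetilde s\}$. Since every simple module is $2$-dimensional, this module is simple. This also excludes discrete series of $G^0$ from the block: an irreducible full induced representation from $P^0$ has central exponents $\nu$ and $-\nu$, with $\nu$ the real part of $\psi$, so it fails Casselman's criterion. That exclusion is precisely what the paper's surjectivity claim needs.

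\textbf{Structure of $\fA^0$ in your version.} Rather than citing a ready-made isomorphism with $C(\mathbf{Kb},\fK)$, you show the fixed-point algebra is locally $C(U,\cK)$ on $W$-stable neighbourhoods. This makes it clear that the merely projective action of the intertwining operators (the cocycle) cannot affect the spectrum. A priori you only get a continuous-trace algebra. Its Dixmier--Douady class lies in $H^3(\mathbf{Kb};\Z)=0$, however, so your argument in fact recovers the paper's stronger statement $\fA^0\cong C(\mathbf{Kb},\fK)$.

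\textbf{Trade-off.} The paper's route is shorter but rests entirely on the two external citations. Yours is longer but mostly self-contained, and it closes the surjectivity gap explicitly.
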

 \begin{proof} We note that $M^0$ is a maximal Levi subgroup of $G^0$ and that $W^{\fs^0}$ acts freely on $X_{\unr}(M^0)$.   We can now apply the Proposition in \cite[\S4.3]{R} 
to infer that each induced representation $\mathrm{Ind}_{M^0N^0}^{G^0} \, \psi \otimes \sigma^0$ with $ \psi \in X_{\unr}(M^0)$ is an irreducible unitary representation of $G^0$,  a \emph{tempered}
representation of $G^0$.      Therefore, the map
\begin{align}\label{Xunr}
X_{\unr}(M^0) & \longrightarrow \widehat{\fA^0} \\
\psi & \mapsto \mathrm{Ind}_{M^0N^0}^{G^0} \, \psi \otimes \sigma^0  \nonumber
\end{align}
is well-defined and surjective.   The map (\ref{Xunr}) determines a bijective map
\[
X_{\unr}(M^0) / W^{\fs^0} \longrightarrow \widehat{\fA^0}
\]
that is to say,  a bijective map
\[
\mathbf{Kb} \longrightarrow \widehat{\fA^0}.
\]
%By transport of structure, $\widehat{\fA}$ admits the same   Klein bottle structure.   The spectrum of $\fB$ admits its standard topology coming from the Zariski topology of primitive ideals in $\fB$.   

At this point in the proof, we shall use the $C^*$-Plancherel Theorem from \cite{P}.   According to Corollary 2.7 in \cite{P}, the Fourier Transform $\mathcal{F}$ determines an isomorphism of $C^*$-algebras
\[
\mathcal{F} : \fA^0 \cong C(\mathbf{Kb}, \mathfrak{K})
\]
where $\mathbf{Kb}$ is the Klein bottle in its natural topology (as quotient of $S^1  \times S^1$), $\mathfrak{K}$ is the $C^*$-algebra of compact operators on separable infinite-dimensional Hilbert space, and $C(\mathbf{Kb}, \fK)$ 
denotes the $C^*$-algebra of all continuous maps from  $\mathbf{Kb}$ to $\fK$.  It follows that $\fA^0$ is strongly Morita equivalent to $C(\mathbf{Kb})$.   Therefore, we have
\[
\widehat{\fA^0} \cong \mathbf{Kb}.
\]
This statement says that the spectrum of $\fA^0$  is homeomorphic to $\mathbf{Kb}$.  
\end{proof} 

The Klein bottle $\mathbf{Kb}$ therefore appears as a  connected compact Hausdorff subspace of the tempered dual of $G^0$.

\section{Positive depth}

We briefly indicate how the preceding analysis may extend to the
positive-depth setting considered by Roche in \cite[\S4.1]{R}.

Let $G:=\SL_8$. With the same basic data $(F,E_2/F,E_4/F,\zeta,\eta,\chi_0)$,
Roche considers  a Levi subgroup $M(F) \subset \SL_8(F)$ such that 
\[M(F)\simeq (\GL_2(F)\times \GL_2(F)\times \GL_4(F))\cap \SL_8(F)\]
and an irreducible supercuspidal representation $\sigma$ of $M(F)$.

For a representation $\widetilde \sigma_i$ of $\GL_{n_i}(F)$, we denote by $\mathfrak{S}(\widetilde \sigma_i)$ the set of characters  $\chi$ of $F^\times$ such that $\widetilde \sigma_i\simeq\chi\widetilde \sigma_i$, where $\chi\widetilde \sigma_i$ 
is the representation $g\mapsto\widetilde \sigma_i(g)\chi(\det(g))$ for $g\in\GL_{n_i}(F)$.   The representations $\widetilde{\sigma_1}$ and $\widetilde{\sigma_2}$, where
$\widetilde\sigma_1$ is an irreducible  supercuspidal representation of $\GL_2(F)$ and  $\widetilde\sigma_2$ an irreducible supercuspidal representation of $\GL_4(F)$, are constructed so that
 \[
 \mathfrak{S}(\widetilde\sigma_1) = \{1,\eta^2\}\quad \quad \textrm{and} \quad \quad \mathfrak{S}(\widetilde\sigma_2) = \langle \chi_0\eta\rangle.
 \]

The representation $\sigma$ is then defined to be the restriction to $M(F)$ of the irreducible supercuspidal representation 
\[\widetilde\sigma:=\widetilde\sigma_1 \otimes \eta\widetilde\sigma_1 \otimes \widetilde\sigma_2\] 
of $\GL_2(F) \times \GL_2(F) \times \GL_4(F)$ where $\sigma_1, \sigma_2$ are the supercuspidals carefully constructed  in \cite[\S4.1]{R}.

The  cuspidal pair $(M,\sigma)$ gives rise to a point $\fs=[M(F),\sigma]_G$ in the Bernstein spectrum $\fB\SL_8$.  
Let
\[
\cH^\fs := \cH(\SL_8(F),(K_\fs,\rho_\fs))
\]
denote the Hecke algebra attached to a type $(K_\fs,\rho_\fs)$
for $\fs$.   The existence of $(K_\fs,\rho_\fs)$ follows from \cite[Theorem~4.4]{GR1}.

By \cite[\S4.2]{R}, the group $W^\fs$ is of order $2$.  Let $\epsilon$ denote the nontrivial element in $W^\fs$.   

Let $\delta$ be the $2$-cocycle which occurs in \cite[Theorem 11.1]{GR2}.  The group $C$ which occurs in their Theorem  11.1 is $\Z/2\Z$ in our case, for which $\delta$ is automatically a coboundary. In that case, we have
$\C[\Z/2\Z]_\delta = \C[\Z/2\Z]$, see \cite[\S11.1, \S11.7]{GR2}.

\subsubsection{A description of the Hecke algebra $\cH^\fs$}

\begin{theorem}\label{cHcH} We have an isomorphism of $\C$-algebras: 
\[\cH^\fs\simeq \C[\Gamma,\mu].\]
%In particular, the $2$-cocycle $\delta$  from \cite{GR2} is trivial, while $\mu$ is nontrivial.
\end{theorem}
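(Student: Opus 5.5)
The natural route is through Goldberg--Roche \cite[Theorem~11.1]{GR2}. It describes $\cH^\fs$, for a type $(K_\fs,\rho_\fs)$ built as in \cite{GR1}, as a twisted tensor product
\[
\cH^\fs\simeq \cH(W^\circ_\fs)\,\tilde\otimes\,\C[C]_\delta .
\]
Here $\cH(W^\circ_\fs)$ is an affine Hecke algebra attached to the reflection part of $W^\fs$, possibly with unequal parameters, sitting over the Laurent polynomial ring $\C[T^\fs]$. The group $C$ is the $R$-group-like complement, and $\delta$ is the $2$-cocycle. In our situation $W^\fs=\{1,\epsilon\}$ by \cite[\S4.2]{R}, and $C=\Z/2\Z$ with $\delta$ a coboundary. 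So the first step is to decide whether $\epsilon$ lies in $W^\circ_\fs$ or in $C$.

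Since $\mathfrak S(\widetilde\sigma_1)=\{1,\eta^2\}$ and the twist by $\eta$ on the middle factor mirrors the depth-zero picture, I expect the following. The element $\epsilon$ acts on $T^\fs$ as in (\ref{s-action}), via $(a_1:a_2:a_3)\mapsto(a_2:a_1:-ia_3)$, and hence without fixed points, exactly as in Theorem~\ref{ETAS}. Accordingly $W^\circ_\fs$ is trivial, there are no affine reflections, and all of $W^\fs$ is carried by $C=\Z/2\Z$. The plan is then as follows.

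\textbf{Step 1.} Compute $T^\fs=X_{\nr}(M)\cdot\sigma$ and the action of $\epsilon$ on it. Here $X_{\nr}(M)$ is the restriction of unramified characters of $\GL_2\times\GL_2\times\GL_4$, modulo those fixing $\sigma$. The stabilisers are read off from $\mathfrak S(\widetilde\sigma_i)$; in particular $\chi_0\eta$ on $\GL_4$ contributes the unramified twist $-i$. This should reproduce the $W^\fs$-equivariant identification with $(\C^\times)^2/J$ of the preceding lemma.

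\textbf{Step 2.} Invoke \cite[Theorem~11.1]{GR2} with $W^\circ_\fs=1$ and $\delta$ trivial. This gives $\cH^\fs\simeq\C[T^\fs]\rtimes\C[\Z/2\Z]$, where $\epsilon$ acts on $\C[T^\fs]$ through Step~1.

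\textbf{Step 3.} Transport this crossed product into the presentation of Theorem~\ref{thm:simples-expanded}. Lemma~\ref{lem:bc-wz} gives $\C[T^\fs]\cong\C[X^{\pm1},Y^{\pm1}]$ with $w=bc$ and $z=b/c$. A short check shows that $\epsilon$ acts by $w\mapsto -w$, $z\mapsto z^{-1}$, which is the relation $s\cdot Y=-Y$ already observed in \cite[\S11.8]{GR2}. The crossed product therefore has generators $s,X^{\pm1},Y^{\pm1}$ subject to
\[
s^2=1,\quad sX=X^{-1}s,\quad sY=-Ys,\quad XY=YX,
\]
which is exactly the presentation of $\C[\Gamma,\mu]$ in Theorem~\ref{thm:simples-expanded}. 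The isomorphism sends $s\mapsto N_s$, $X\mapsto N_X$, $Y\mapsto N_Y$. It is bijective on the standard bases $X^mY^ns^\varepsilon \leftrightarrow N_{(m,\varepsilon,n)}$ up to signs, because both algebras are free of rank $2$ over the same Laurent ring.

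The main obstacle is Step~1 together with the hypothesis in Step~2. One must verify from Roche's construction that $\epsilon$ genuinely acts on $T^\fs$ with the sign $-1$ on $w$, rather than $w\mapsto w$, and that the intertwining operator $T_\epsilon$ can be normalised so that $T_\epsilon^2=1$. This normalisation is what makes $\delta$ a coboundary. I would first compute $\epsilon$ on $T^\fs$ using $\eta\widetilde\sigma_1\simeq$ (unramified twist)$\cdot\widetilde\sigma_1\otimes\chi_0\eta$-type relations, and then rescale $T_\epsilon$ by a scalar square root to get $T_\epsilon^2=1$.
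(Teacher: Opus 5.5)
Your proposal is correct and is essentially the paper's argument. The paper also takes from \cite[\S11.8]{GR2}, with $C=\Z/2\Z$ and $\delta$ a coboundary, the description $\cH^\fs\cong\C[X^{\pm1},Y^{\pm1}]\widetilde\otimes_t\C[\Z/2\Z]$ with $t(\epsilon)\colon X\mapsto X^{-1},\ Y\mapsto -Y$, and then matches it with the presentation of $\C[\Gamma,\mu]$ in Theorem~\ref{thm:simples-expanded} via $s\mapsto t(\epsilon)$. The only difference is that the paper reads the action of $\epsilon$ directly off \cite[\S11.8]{GR2}, whereas you re-derive it from the torus action $(b,c)\mapsto(ic,ib)$ in the coordinates $w=bc$, $z=b/c$ of Lemma~\ref{lem:bc-wz}; this is a consistency check rather than a different route.
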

\begin{proof}
By \cite[\S11.8]{GR2}, we have an isomorphism 
\[
\cH^{\fs} \cong \C[X^{\pm 1},Y^{\pm 1}]\widetilde\otimes_t \C[\Z/2\Z]
\]
in the notation of  the introduction of \cite{GR2}. Here 
\[t\colon \Z/2\Z\to\mathrm{Aut}(\C[X^{\pm 1},Y^{\pm 1}])\]
is the injective homomorphisms of groups defined by 
\begin{align}\label{t-e}
t(\epsilon)(X):=X^{-1}\quad\text{and}\quad t(\epsilon)(Y):=-Y.
\end{align}

We recall the isomorphism
\begin{align*}
\C[\Z\rtimes\Z/2 \times \Z, \mu]  & \cong \C[s,X,X^{-1},Y,Y^{-1}] \Big/\bigl(s^2-1,\; sY+Ys,\; sX-X^{-1}s,\; XY-YX\bigr).
 %\cH^\fs  & \cong [\C[X^{\pm 1},Y^{\pm 1}]\widetilde\otimes_t \C[\Z/2\Z,\delta].
 \end{align*}
The map $s \mapsto t(\varepsilon)$ effects an isomorphism: 
\begin{align*}
 \C[s,X,X^{-1},Y,Y^{-1}] \Big/\bigl(s^2-1,\; sY+Ys,\; sX-X^{-1}s,\; XY-YX\bigr)  & \cong [\C[X^{\pm 1},Y^{\pm 1}]\widetilde\otimes_t \C[\Z/2\Z]\\
 s & \mapsto t(\varepsilon)
 \end{align*}
 in view of the equations (\ref{t-e}).   
\end{proof}

\subsubsection{Comparison}   It follows from Theorem \ref{cHcH} that $\cH^\fs$ and $\cH^{\fs^0}$ will share the same simple modules.     We have 
\[
T^\fs \cong T^{\fs^0} \cong (\C^\times)^2, \qquad W^\fs \cong W^{\fs^0} \cong \Z/2\Z.
\]

Much more than this is true.

\begin{theorem}  Let $G = \SL_8(F)$.  Let $\fs$ be the point in the Bernstein spectrum $\fB G$ determined by the cuspidal pair $(M,\sigma)$.   
  Let $T^{\fs}$ be the Bernstein torus attached to  $\fs$.   Then we have a $\Z/2\Z$-equivariant isomorphism
\[
 \tau :  T^{\fs} \cong T^{\fs^0}.   
\]
\end{theorem}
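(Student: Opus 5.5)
We will describe $T^{\fs}$ in coordinates of exactly the same shape as those used for $T^{\fs^0}$, and then take $\tau$ to be the identity in these coordinates. The steps are: coordinatise $T^{\fs}$, compute the action of $\epsilon$, identify $T^{\fs}$ with $(\C^\times)^2/J$, and finally match the actions on the two sides.

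\emph{Coordinates on $T^{\fs}$.} Unramified characters of $M(F)=(\GL_2(F)\times\GL_2(F)\times\GL_4(F))\cap\SL_8(F)$ are restrictions of characters
\[
(g_1,g_2,g_3)\mapsto a_1^{\val_F\det g_1}\,a_2^{\val_F\det g_2}\,a_3^{\val_F\det g_3}.
\]
On $M(F)$ the three valuations sum to zero, so we get homogeneous coordinates $(a_1:a_2:a_3)$, exactly as in (\ref{VAL}). The torus $T^{\fs}$ is not $X_{\nr}(M)$ itself but its quotient by the stabiliser of $\sigma$, the group of unramified $\psi$ with $\psi\sigma\cong\sigma$. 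We will compute this stabiliser from $\mathfrak{S}(\widetilde\sigma_1)=\{1,\eta^2\}$ and $\mathfrak{S}(\widetilde\sigma_2)=\langle\chi_0\eta\rangle$, following \cite[\S4.2]{R}. The unramified elements there come from $\eta^2\cdot(\text{unramified})$ and $(\chi_0\eta)^k$. Restricting to $M(F)$ via the determinant-valuation analysis of \cite[\S4.1--4.2]{R}, we expect the stabiliser to be generated by the diagonal and by the sign element $(1:1:-1)$. This is the analogue of the kernel of $f$ in the depth-zero lemma, where $f(b,b,-b)=1$ came from Lemma \ref{normE4}(2). The outcome should be $T^{\fs}\cong(\C^\times)^2/J$ via $(a_1:a_2:a_3)\mapsto(a_1/a_3,a_2/a_3)$.

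\emph{The action of $\epsilon$.} The nontrivial element $\epsilon$ swaps the two $\GL_2$ blocks. Since $\widetilde\sigma=\widetilde\sigma_1\otimes\eta\widetilde\sigma_1\otimes\widetilde\sigma_2$, swapping gives $\eta\widetilde\sigma_1\otimes\widetilde\sigma_1\otimes\widetilde\sigma_2$. Twisting by $\eta^{-1}\circ\det$, which is trivial on $\SL_8$, and using $\eta^2\in\mathfrak{S}(\widetilde\sigma_1)$, this becomes $\widetilde\sigma_1\otimes\eta\widetilde\sigma_1\otimes\eta^{-1}\widetilde\sigma_2$. We then need to absorb $\eta^{-1}$ on the $\GL_4$ factor. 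Since $\eta^{-1}=\chi_0\cdot(\chi_0\eta)^{-1}$ and $(\chi_0\eta)^{-1}\in\mathfrak{S}(\widetilde\sigma_2)$, we get $\eta^{-1}\widetilde\sigma_2\cong\chi_0\widetilde\sigma_2$, with $\chi_0(\varpi_F)=-i$. So in coordinates
\[
\epsilon\cdot(a_1:a_2:a_3)=(a_2:a_1:-ia_3)=(ia_2:ia_1:a_3),
\]
which is literally formula (\ref{s-action}) for $\widetilde s$.

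\emph{Conclusion and main obstacle.} We define $\tau$ by $(a_1:a_2:a_3)\mapsto(a_1:a_2:a_3)$, composed with the two identifications with $(\C^\times)^2/J$. By the previous step this map intertwines $\epsilon$ with $s$, i.e.\ with $(b,c)\mapsto(ic,ib)$, and it is an algebraic isomorphism. As a check, Lemma \ref{lem:bc-wz} then carries both actions to $(w,z)\mapsto(-w,z^{-1})$, matching (\ref{t-e}) and Theorem \ref{thm:simples-expanded}. The hardest step will be the exact determination of the stabiliser on the positive-depth side, in particular showing that $(1:1:-1)$ fixes $\sigma$ and that nothing larger does. This requires care that $\eta^2$ restricted to unramified twists on $\GL_2$ blocks, combined through the $\SL_8$ constraint, produces exactly $J$. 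To settle it we will compute directly with the valuation parity constraints, using Roche's explicit descriptions of $\mathfrak{S}(\widetilde\sigma_i)$.
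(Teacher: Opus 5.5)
Your proposal is essentially the paper's proof. In both, $\tau$ is the identity in the homogeneous coordinates $(a_1:a_2:a_3)$, both tori are $(\C^\times)^3$ modulo the diagonal and $(1:1:-1)$, and equivariance holds because $\epsilon$ and $\widetilde s$ both act by $(a_1:a_2:a_3)\mapsto(a_2:a_1:-ia_3)$. The paper cites ${}^w\sigma\cong(1\otimes 1\otimes\chi_0)\sigma$ from \cite[\S4.2]{R}, whereas you rederive it from $\mathfrak{S}(\widetilde\sigma_1)$ and $\mathfrak{S}(\widetilde\sigma_2)$.

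For the step you flag, the paper gets $(1:1:-1)\in\mathrm{Stab}(\sigma)$ by applying $\epsilon$ twice, giving $\sigma\cong(1\otimes1\otimes\chi_0^2)\sigma$, and only asserts that nothing larger stabilises $\sigma$. Your planned computation closes that: $\widetilde\psi\widetilde\sigma\cong(\lambda\circ\det)\widetilde\sigma$ (with $\widetilde\psi$ the extension of $\psi$ to $\GL_2\times\GL_2\times\GL_4$), together with $\mathfrak{S}(\widetilde\sigma_1)=\{1,\eta^2\}$ and $\mathfrak{S}(\widetilde\sigma_2)=\langle\chi_0\eta\rangle$, forces $a_1=a_2$ and $a_3=\pm a_1$.
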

\begin{proof}   The map $\tau$ is defined as follows:
\begin{align*} 
\tau : X_{\nr}(M) & \longrightarrow X_{\nr}(M^0) \\
a^{\val \circ \det}\;b^{\val \circ \det}\;c^{\val \circ \det} & \mapsto a^{\val \circ N_{E_2/F}}\; b^{\val \circ N_{E_2/F}}\; c^{\val \circ N_{E_4/F}}
\end{align*}
with $a,b,c  \in \C^\times$.

Given 
\[
(A, B, C) \in (\GL_2(F) \times  \GL_4(F) \times \GL_4(F)) \cap \SL_8(F), 
\]
we note that $\det(A) \det(B) \det(C) = 1$.   Therefore we have
\[
\val(\det(A)) + \val(\det(B)) + \val(\det(C)) = 0.
\]

So the unramified character in the domain of $\tau$ is trivial when $a = b = c$.   

Following \cite[\S4.1]{R} we let $\chi_0$ denote the unramified character of $F^\times$ such that $\chi_0(\varpi_F) = - \sqrt{-1}$.   Then, for all $C \in \GL_4(F)$, we have
\[
(\chi_0 \sigma) (C) = -i ^{\val(\det(C))} \sigma(C)
\]
According to \cite[\S4.2]{R} we have
\[
{}^w \sigma \cong (1 \otimes 1 \otimes \chi_0) \sigma.
\]
Applying $w$ one more time, we obtain
\begin{align*}
\sigma & \cong (1 \otimes 1 \otimes \chi_0^2) \sigma   \\
& \cong (1 \otimes 1 \otimes (-1)^{\val \circ \det}\,)\sigma.
\end{align*}

In both the domain and co-domain of the map $\tau$, the characters are parametrized by triples $(a,b,c)$.   In both the domain and co-domain of the map $\tau$, the character is trivial if and only if 
the triple is $(a,a,a)$ or $(b,b-b)$.   Therefore, the map $\tau$ is an isomorphism of abelian groups.   

The character $1 \otimes 1 \otimes \chi_0$ has homogeneous coordinates $(1 : 1 : -i)$ and the character $1 \otimes 1 \otimes \chi_0^2$ has homogeneous coordinates $(1 : 1 : -1)$.   

The map $\tau$ is $\Z/2\Z$-equivariant for the following reason.   In both the domain and co-domain of $\tau$, the non-trivial element $s$ of $\Z/2\Z$ acts on homogeneous triples as follows:
\[
s \cdot (a:b:c) =  (b:a: -ic) = (ib: ic : c).
\]
\end{proof}

\begin{theorem}   There is a canonical isomorphism of Bernstein varieties:
\[
T^\fs / W^\fs \cong T^{\fs^0} / W^{\fs^0}.
\]
The Bernstein variety $T^\fs / W^\fs$ is a complex algebraic variety of dimension $2$ whose maximal compact real form is a Klein bottle $\mathbf{Kb}$.   This Klein bottle is a  compact Hausdorff subspace of the  tempered dual of 
$\SL_8$:   
\[
\mathbf{Kb} \subset \Irr^\temp(\SL_8)
\]
\end{theorem}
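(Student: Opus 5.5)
The isomorphism of Bernstein varieties comes from the preceding theorem. That result gives a $\Z/2\Z$-equivariant isomorphism $\tau\colon T^\fs\cong T^{\fs^0}$, where the nontrivial element acts on both sides by $s\cdot(a:b:c)=(ib:ia:c)$. Since $W^\fs\cong W^{\fs^0}\cong\Z/2\Z$ and both are generated by elements acting by this same formula, $\tau$ descends to an isomorphism of quotient varieties
\[
T^\fs/W^\fs\cong T^{\fs^0}/W^{\fs^0}.
\]
The isomorphism is canonical in the sense that $\tau$ is defined by replacing $\val\circ\det$ by $\val\circ N_{E/F}$ in each coordinate, with no other choices. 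Both sides are quotients of a $2$-dimensional torus by a finite group, so the quotient is a complex algebraic variety of dimension $2$.

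For the Klein bottle, I would transport everything to the depth-zero side. There, Theorem~\ref{prop:Klein} together with Lemma~\ref{lem:bc-wz} identifies $T^{\fs^0}/W^{\fs^0}$ with $(\C^\times)^2/\bigl((w,z)\sim(-w,z^{-1})\bigr)$. The identification uses $w=bc$ and $z=b/c$; under these coordinates the involution $(b,c)\mapsto(ic,ib)$ becomes $(w,z)\mapsto(-w,z^{-1})$, since $ic\cdot ib=-bc$ and $ic/ib=c/b$. The unitary locus $|a_j|=1$ corresponds exactly to $|w|=|z|=1$, and it is stable under the action. By Corollary~\ref{cor:Klein-compact}, its quotient is a Klein bottle $\mathbf{Kb}$. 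The equivariant deformation retraction of $(\C^\times)^2$ onto $S^1\times S^1$, namely radial retraction in each coordinate, commutes with $\tau$ and with inversion. Hence $\mathbf{Kb}$ is the maximal compact real form, and $\tau$ carries the unitary characters $X_{\unr}(M)$ onto $X_{\unr}(M^0)$.

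It remains to embed $\mathbf{Kb}$ into $\Irr^\temp(\SL_8)$. I would define
\[
\psi\mapsto \mathrm{Ind}_{P}^{G}(\psi\otimes\sigma),\qquad \psi\in X_{\unr}(M),
\]
with $P$ a parabolic subgroup of $G$ with Levi factor $M$. Since $\sigma$ is unitary supercuspidal and $\psi$ is unitary, each induced representation is tempered. The key input is irreducibility. I would take it from Roche's Proposition \cite[\S4.3]{R}, which applies because $M$ is, up to the $W^\fs$-orbit, in the required position and $W^\fs$ acts freely on $X_{\unr}(M)$. Freeness follows from the depth-zero computation via $\tau$: there are no fixed points because $w\neq -w$. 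Two such induced representations are isomorphic exactly when the characters lie in the same $W^\fs$-orbit. This gives a bijection $X_{\unr}(M)/W^\fs=\mathbf{Kb}\to\widehat{\fB^\fs}\subset\Irr^\temp(\SL_8)$.

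The main obstacle is showing that this bijection is a homeomorphism onto a Hausdorff subspace, not merely a continuous bijection. I would proceed exactly as in Theorem~\ref{spectrum}. The $C^*$-Plancherel theorem \cite[Corollary~2.7]{P} gives $\fB^\fs\cong C(\mathbf{Kb},\fK)$, which applies because the $R$-groups are trivial and $W^\fs$ acts freely. Hence $\fB^\fs$ is Morita equivalent to $C(\mathbf{Kb})$, and its spectrum, a closed and open component of the tempered dual, is homeomorphic to $\mathbf{Kb}$.
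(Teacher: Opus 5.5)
Your proposal is correct and follows the paper's proof: the $\Z/2\Z$-equivariant isomorphism $\tau$ of the preceding theorem descends to the quotients, irreducibility of $\mathrm{Ind}(\psi\otimes\sigma)$ for unitary $\psi$ is taken from Roche \cite{R}, and the $C^*$-Plancherel theorem \cite[Corollary~2.7]{P} upgrades the bijection $\mathbf{Kb}\to\widehat{\fA}$ to a homeomorphism. You also make explicit several steps the paper leaves implicit, namely the descent of $\tau$ to the quotients, the check that $(b,c)\mapsto(ic,ib)$ becomes $(w,z)\mapsto(-w,z^{-1})$ in the coordinates of Lemma~\ref{lem:bc-wz}, and the free action of $W^\fs$ on $T^\fs$ as the reason the unitarily induced representations are irreducible.
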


\begin{proof}
Let $\fC$ denote the reduced $C^\ast$-algebra of $G$.   We have the Bernstein decomposition
\[
\fC  = \bigoplus_{\ft\in\fB G} \, \fC^{\ft}
\]
as a $C^\ast$-direct sum.   Define
\[
\fA: = \fC^{\fs}
\]
and let $X_{\unr}(M)$ denote the group of unramified unitary characters of $M$.

 Let $\widehat{\fA}$ denote the spectrum of $\fA$, equipped with its standard Jacobson topology on the set of its primitive ideals, see \cite[\S3.1]{D}.  
 
According to \cite[\S4.1]{R}, each induced representation $\mathrm{Ind}_{MN}^{G} \, \psi \otimes \sigma$ with $ \psi \in X_{\unr}(M^)$ is an irreducible unitary representation of $G$,  a \emph{tempered}
representation of $G$.      Therefore, the map
\begin{align}\label{Xunr}
X_{\unr}(M) & \longrightarrow \widehat{\fA} \\
\psi & \mapsto \mathrm{Ind}_{MN}^{G} \, \psi \otimes \sigma  \nonumber
\end{align}
is well-defined and surjective.   The map (\ref{Xunr}) determines a bijective map
\[
X_{\unr}(M) / W^{\fs} \longrightarrow \widehat{\fA}
\]
that is to say,  a bijective map
\[
\mathbf{Kb} \longrightarrow \widehat{\fA}.
\]  

As in the proof of Theorem \ref{spectrum}, the $C^*$-Plancherel Theorem  now secures a homeomorphism
\[
\widehat{\fA} \cong \mathbf{Kb}.
\]
\end{proof} 

The Klein bottle $\mathbf{Kb}$ therefore appears as a  connected compact Hausdorff subspace of the tempered dual of $G$.


\smallskip  

\begin{thebibliography}{9999999}
\bibitem[AFMO]{AFMO1} J.D. Adler, J. Fintzen, M. Mishra, and K. Ohara, ``Structure of Hecke algebras arising from types", preprint arXiv:2408.07801, 2024.
%\bibitem[AFMO2]{AFMO2} \bysame, ``Reduction to depth zero for tame $p$-adic groups via Hecke algebra isomorphisms", preprint arXiv:2408.07805, 2024.
\bibitem[AFO]{AFO} J.D. Adler, J. Fintzen, and K. Ohara, ``A depth-zero principal-series block whose Hecke algebra has a non-trivial cocycle", preprint arXiv:2510.07391, 2025.
%\bibitem[ABPS]{ABPS} A.-M. Aubert, P. Baum, R.J. Plymen, M. Solleveld,  Hecke algebras for inner forms of $p$-adic special linear groups, J. Inst. Math. Jussieu \textbf{16} (2017),  35--419.
\bibitem[Ber]{Ber} J. N. Bernstein, ``Le ``centre" de Bernstein", in \textit{Representations of reductive groups over a local field} (P. Deligne, ed.), Hermann, Paris, 1984, 1--32. 
\bibitem[BK]{BK} C.J. Bushnell and P.C. Kutzko, ``Smooth representations of reductive $p$-adic groups: structure theory via types", Proc. London Math. Soc. (3) \textbf{77} (1998), no. 3, 582--634.
\bibitem[D]{D} J. Dixmier, ``$C^*$-algebras", North-Holland 1982.   
\bibitem[GR1]{GR1} D. Goldberg and A. Roche, ``Types in $\SL_n$", Proc. London Math. Soc. (3) \textbf{85} (2002), 119--138.
\bibitem[GR2]{GR2} \bysame, ``Hecke algebras and $\SL_n$-types", Proc. London Math. Soc. (3) \textbf{90} (2005), 87--131.
%\bibitem[KY]{Kim-Yu} J.-L. Kim and  J.-K. Yu,``Construction of tame types", in \emph{Representation theory, number theory, and invariant theory}, 337--357, Progr. Math. \textbf{323}, Birkhäuser/Springer, Cham, 2017.
%\bibitem[L]{L} R.L.  Lipsman, Group representations, LNM 388 (1974).   
\bibitem[Mo]{Mo} L. Morris, ``Level Zero \textbf{G}-Types", Compositio Math. \textbf{118} (1999), 135--157. 
\bibitem[P]{P} R.J. Plymen, ``Reduced $C^*$-algebra of reductive $p$-adic groups", J. Functional Analysis, \text{88} ( 1990) 251--266.
%\bibitem[R0]{R0} A. Roche, Types and Hecke algebras for principal series representations of split reductive p-adic groups,
%Annales scientifiques de l'\'{E}cole Normale Sup\'{e}rieure (4) 31 (1998), 36--413.  
\bibitem[R]{R} A. Roche, ``Parabolic induction and the Bernstein decomposition", Compositio Math. \textbf{134} (2002), 113--133. 
%\bibitem[S]{S} H. Sato, Algebraic topology: an intuitive approach,  AMS Translations of Math. Monographs 183 (1999). 
\bibitem[W]{W} C.~A.~Weibel, \emph{An Introduction to Homological Algebra}, Cambridge Studies in Advanced Mathematics \textbf{38}, Cambridge University Press, 1994.
\end{thebibliography}
\end{document}